\numberwithin{equation}{section}
\newtheorem{theorem}[equation]{Theorem}
\newtheorem{proposition}[equation]{Proposition}
\newtheorem{definition}[equation]{Definition}
\newtheorem{lemma}[equation]{Lemma}
\title{Mapping properties of weakly singular periodic volume  potentials  in Roumieu classes}
\author{M.~Dalla Riva\thanks{Department of Mathematics,
The University of Tulsa,
800 S Tucker Dr,
Tulsa, Oklahoma 74104, USA} $^{\,,}$\thanks{Department of Mathematics, Aberystwyth University, Ceredigion SY23 3BZ, Wales, UK.}\ , M.~Lanza de Cristoforis\thanks{Dipartimento di Matematica ``Tullio Levi-Civita'', Universit\`a degli Studi di Padova, Via Trieste 63, Padova 35121, Italy}\ ,  and P.~Musolino{$^\dag$}}
\date{$\ $}
\begin{document}

\maketitle

                               % Please, write the date of submission

%\keywords{periodic volume  potentials; integral operators;  Roumieu classes;    periodic kernels;
%special nonlinear operators}

%\subjclass{31B10,  47H30}
        % AMS-2010 subj class. The list can be found on http://www.ams.org/mathscinet/msc/msc2010.html

%\thanks{This research is supported by ...\par This paper is a lecture that was given at...} 
        % Optional. Only one command thanks is allowed, use \par inside text if you need multiple thanks.       

\begin{abstract}
        The analysis of the dependence of integral operators on perturbations plays an important role in the study of inverse problems and of perturbed boundary value problems. In this paper we focus on the mapping properties of the volume  potentials with weakly singular periodic kernels. Our main result is to prove that the map which takes a density function and a periodic kernel to a (suitable restriction of the) volume potential is bilinear and continuous with values in a Roumieu class of analytic functions. Such result extends to the periodic case some previous results obtained by the authors for non periodic potentials and it is motivated by the study of perturbation problems for the solutions of boundary value problems for elliptic differential equations in periodic domains.
\end{abstract}

\noindent
{\bf Keywords:} periodic volume  potentials; integral operators;  Roumieu classes;    periodic kernels;
special nonlinear operators.

\noindent   
{{\bf 2010 Mathematics Subject Classification:}}  31B10,  47H30. 

\section{Introduction}
\label{introd}
 This paper deals with the mapping properties of certain integral operators which arise in periodic potential theory. More precisely,  the authors continue the work  of \cite{DaLaMu15, La05}  where it has been shown that    volume  potentials associated to a parameter dependent analytic family of weakly singular kernels depend real-analyt\-ically  upon the density   function  and on the parameter.
The results of  \cite{DaLaMu15} have found application in the analysis of volume potentials corresponding to an analytic family  of fundamental solutions of second order differential operators with constant coefficients. Here, instead, the authors   extend the results of \cite{DaLaMu15} to the case where the kernels are (spatially) periodic functions and thus provide a useful tool to study the integral operators which appear in the analysis of periodic boundary value problems. \par

Several authors have studied the dependence of integral operators upon perturbations, with particular attention to layer and volume potentials associated to  elliptic differential equations. For example, Potthast has proved in  \cite{Po94,Po96a,Po96b} Fr\'echet differentiability results for the dependence of layer potentials for the Helmholtz equation upon the support of integration in the framework of Schauder spaces. In the frame of inverse problems, we mention the works by Charalambopoulos \cite{Ch95}, Haddar and Kress \cite{HaKr04}, Hettlich \cite{He95}, Kirsch \cite{Ki93}, and Kress and P\"aiv\"arinta \cite{KrPa99}. Moreover, Costabel and Le Lou\"er \cite{CoLe12a, CoLe12b, Le12} obtained analogous results in the framework of Sobolev spaces on Lipschitz  domains.

The authors of the present paper and collaborators have developed a method based on functional analysis and on potential theory to prove analyticity results for the solution of boundary value problems upon perturbations of the domain and of the data. Indeed, classical potential theory enables to represent the solution of a boundary value problem in terms of integral operators. Therefore, in order to study the behavior of the solution upon perturbation, one needs to investigate perturbation results for layer and volume potentials. Thus 
 \cite{LaRo04, LaRo08} have analyzed  the layer potentials associated to the Laplace and Helmholtz equations, whereas   \cite{DaLa10}  has investigated the case of layer potentials corresponding to   second order complex constant coefficient elliptic differentials operators,  and   \cite{LaMu11} has considered  a periodic analog.
 
 The aim of this paper is to analyze the behavior  of  volume potentials corresponding to periodic kernels.  More precisely, here we prove that the bilinear map which takes a weakly singular periodic kernel and a density function to the corresponding periodic volume potential is continuous with values in a Roumieu class of analytic functions (see Theorems \ref{qropo} and \ref{qropoo}). As shown by Preciso \cite{Pr98, Pr00}, such a space of analytic functions is the convenient choice in order the ensure analyticity results for the composition operators in Schauder spaces (see also \cite[\S 1, 6]{DaLaMu15}). We note that, by replacing the weakly singular periodic kernels by periodic analogs of the fundamental solution of elliptic differential operators with constant coefficients,  the results of the present paper  allow to analyze perturbation problems for periodic boundary value problems for non-homogeneous differential equations by a potential theoretic method (cf.~{\it e.g.}, \cite[\S 5]{DaLaMu15}). The reduction of periodic problems to integral equations  is a powerful tool to investigate singular perturbation problems in periodic domains. As an example, we mention the asymptotic extension  method of Ammari and Kang \cite{AmKa07}, Ammari, Kang, and Lee \cite{AmKaLe09}, and the functional analytic approach of the authors  \cite{DaMu13, LaMu14}. As it is well known, periodic boundary value problems have a large variety of applications. Here we refer for example to Ammari and Kang \cite[Chs.~2, 8]{AmKa07}, Kapanadze, Mishuris, and Pesetskaya \cite{KaMiPe15, KaMiPe15b}, Milton \cite[Ch.~1]{Mi02}, Mishuris and Slepyan \cite{MiSl14}, Mityushev,  Pesetskaya, and Rogosin \cite{MiPeRo08}, and Movchan, Movchan,  and Poulton \cite{MoMoPo02}. In particular, such problems are relevant in the  computation of effective properties of composite materials, which in turn can be justified by the homogenization theory (cf.~\textit{e.g.}, Allaire \cite[Ch. 1]{Al02}, Bakhvalov and Panasenko \cite{BaPa89}, Bensoussan, Lions, and Papanicolaou \cite[Ch. 1]{BeLiPa78}).

The paper is organized as follows. In section \ref{nota}, we introduce some basic notation. In section \ref{qvopoke} we prove our main Theorems \ref{qropo} and \ref{qropoo}, where we consider volume potentials with periodic kernel taken in a suitable weighted space of analytic functions and density function belonging to a Roumieu class. We show that the map which takes the pair consisting of the kernel and of the density  to a suitable restriction of the corresponding volume potential is bilinear and continuous with values in a Roumieu class. In particular, in Theorem \ref{qropo} we consider the restriction of the volume potential to a periodic infinite  union of bounded sets, while in Theorem \ref{qropoo} we study the restriction to the complement of a periodic infinite union of bounded sets.  In the last Section \ref{Sq}, we present a class of  kernels which are of the type considered in our main Theorems \ref{qropo} and \ref{qropoo} and which play an important role in the treatment of periodic boundary value problems. Such a class consists of the $q$-periodic analogs of the fundamental solution of elliptic partial differential operators of second order.

\section{Notation}\label{nota}

We  denote the norm on 
a   normed space ${\mathcal X}$ by $\|\cdot\|_{{\mathcal X}}$. Let 
${\mathcal X}$ and ${\mathcal Y}$ be normed spaces. We endow the  
space ${\mathcal X}\times {\mathcal Y}$ with the norm defined by 
$\|(x,y)\|_{{\mathcal X}\times {\mathcal Y}}\equiv \|x\|_{{\mathcal X}}+
\|y\|_{{\mathcal Y}}$ for all $(x,y)\in  {\mathcal X}\times {\mathcal 
Y}$, while we use the Euclidean norm for ${\mathbb{R}}^{n}$.
 For 
standard definitions of Calculus in normed spaces, we refer to 
Deimling~\cite{De85}. The symbol ${\mathbb{N}}$ denotes the 
set of natural numbers including $0$.    Let 
${\mathbb{D}}\subseteq {\mathbb {R}}^{n}$. Then $\mathrm{cl}{\mathbb{D}}$ 
denotes the 
closure of ${\mathbb{D}}$, and $\partial{\mathbb{D}}$ denotes the boundary of ${\mathbb{D}}$. %, and ${\mathrm{diam}} ({\mathbb{D}})$ denotes the diameter of   ${\mathbb{D}}$. [NON VIENE USATO]
The symbol
$| \cdot|$ denotes the Euclidean modulus   in
${\mathbb{R}}^{n}$ or in ${\mathbb{C}}$. For all $R\in]0,+\infty[$, $ x\in{\mathbb{R}}^{n}$, 
$x_{j}$ denotes the $j$-th coordinate of $x$, and  
 ${\mathbb{B}}_{n}( x,R)$ denotes the ball $\{
y\in{\mathbb{R}}^{n}:\, | x- y|<R\}$. 
Let $\Omega$ be an open 
subset of ${\mathbb{R}}^{n}$. The space of $m$ times continuously 
differentiable complex-valued functions on $\Omega$ is denoted by 
$C^{m}(\Omega,{\mathbb{C}})$, or more simply by $C^{m}(\Omega)$. 
Let $f\in \left(C^{m}(\Omega)\right) $. Then   $Df$ denotes the gradient of $f$. 
Let  $\eta\equiv
(\eta_{1},{...} ,\eta_{n})\in{\mathbb{N}}^{n}$, $|\eta |\equiv
\eta_{1}+{...} +\eta_{n}  $. Then $D^{\eta} f$ denotes
$\frac{\partial^{|\eta|}f}{\partial
x_{1}^{\eta_{1}}{...}\partial x_{n}^{\eta_{n}}}$.    The
subspace of $C^{m}(\Omega )$ of those functions $f$ whose derivatives $D^{\eta }f$ of
order $|\eta |\leq m$ can be extended with continuity to 
$\mathrm{cl}\Omega$  is  denoted $C^{m}(
\mathrm{cl}\Omega )$. 
The
subspace of $C^{m}(\mathrm{cl}\Omega ) $  whose
functions have $m$-th order derivatives that are
H\"{o}lder continuous  with exponent  $\alpha\in
]0,1]$ is denoted $C^{m,\alpha} (\mathrm{cl}\Omega )$  
(cf.~\textit{e.g.},~Gilbarg and 
Trudinger~\cite{GiTr83}).  Let 
${\mathbb{D}}\subseteq {\mathbb{C}}^{n}$. Then $C^{m
,\alpha }(\mathrm{cl}\Omega ,{\mathbb{D}})$ denotes
$\left\{f\in \left( C^{m,\alpha} (\mathrm{cl}\Omega )\right)^{n}:\ f(
\mathrm{cl}\Omega )\subseteq {\mathbb{D}}\right\}$. The subspace of $C^{m}(\mathrm{cl}\Omega ) $ of those functions $f$ such that $f_{|{\mathrm{cl}}(\Omega\cap{\mathbb{B}}_{n}(0,R))}\in
C^{m,\alpha}({\mathrm{cl}}(\Omega\cap{\mathbb{B}}_{n}(0,R)))$ for all $R\in]0,+\infty[$ is denoted $C^{m,\alpha}_{{\mathrm{loc}}}(\mathrm{cl}\Omega ) $. \par
Now let $\Omega $ be a bounded
open subset of  ${\mathbb{R}}^{n}$. Then $C^{m}(\mathrm{cl}\Omega )$ 
and $C^{m,\alpha }({\mathrm{cl}}
\Omega )$ are endowed with their usual norm and are well known to be 
Banach spaces  (cf.~\textit{e.g.}, Troianiello~\cite[\S 1.2.1]{Tr87}). For the definition of a bounded open Lipschitz subset of ${\mathbb{R}}^{n}$, we refer for example to Ne\v{c}as~\cite[\S 1.3]{Ne67}. We say that a bounded open subset $\Omega$ of ${\mathbb{R}}^{n}$ is of class 
$C^{m}$ or of class $C^{m,\alpha}$, if it is a 
manifold with boundary imbedded in 
${\mathbb{R}}^{n}$ of class $C^{m}$ or $C^{m,\alpha}$, respectively
 (cf.~\textit{e.g.}, Gilbarg and Trudinger~\cite[\S 6.2]{GiTr83}). 
We denote by 
$
\nu_{\Omega}
$
the outward unit normal to $\partial\Omega$.  For standard properties of functions 
in Schauder spaces, we refer the reader to Gilbarg and 
Trudinger~\cite{GiTr83} and to Troianiello~\cite{Tr87}
(see also \cite[\S 2]{LaRo04}).

\par
We denote by $d\sigma$ the area element of a manifold imbedded in ${\mathbb{R}}^{n}$. We retain the standard notation for the Lebesgue spaces. \par

We note that 
throughout the paper `analytic'   means always  `real analytic'. For the 
definition and properties of analytic operators, we refer to Deimling~\cite[\S 15]{De85}.  \par

 If $\Omega$ is an open subset of ${\mathbb{R}}^{n}$, $k\in {\mathbb{N}}$, $\beta\in]0,1]$, we set
\[
C^{k}_{b}({\mathrm{cl}}\Omega)\equiv
\{
u\in C^{k}({\mathrm{cl}}\Omega):\,
D^{\gamma}u\ {\mathrm{is\ bounded}}\ \forall\gamma\in {\mathbb{N}}^{n}\
{\mathrm{such\ that}}\ |\gamma|\leq k
\}\,,
\]
and we endow $C^{k}_{b}({\mathrm{cl}}\Omega)$ with its usual  norm
\[
\|u\|_{ C^{k}_{b}({\mathrm{cl}}\Omega) }\equiv
\sum_{|\gamma|\leq k}\sup_{x\in {\mathrm{cl}}\Omega }|D^{\gamma}u(x)|\qquad\forall u\in C^{k}_{b}({\mathrm{cl}}\Omega)\,. 
\]
Then we set
\[
C^{k,\beta}_{b}({\mathrm{cl}}\Omega)\equiv
\{
u\in C^{k,\beta}({\mathrm{cl}}\Omega):\,
D^{\gamma}u\ {\mathrm{is\ bounded}}\ \forall\gamma\in {\mathbb{N}}^{n}\
{\mathrm{such\ that}}\ |\gamma|\leq k
\}\,,
\]
and we endow $C^{k,\beta}_{b}({\mathrm{cl}}\Omega)$ with its usual  norm
\[
\|u\|_{ C^{k,\beta}_{b}({\mathrm{cl}}\Omega) }\equiv
\sum_{|\gamma|\leq k}\sup_{x\in {\mathrm{cl}}\Omega }|D^{\gamma}u(x)|
+\sum_{|\gamma| = k}|D^{\gamma}u: {\mathrm{cl}}\Omega |_{\beta}
\qquad\forall u\in C^{k,\beta}_{b}({\mathrm{cl}}\Omega)\,,
\]
where $|D^{\gamma}u: {\mathrm{cl}}\Omega |_{\beta}$ denotes the $\beta$-H\"{o}lder constant of $D^{\gamma}u$.

We now fix once for all a natural number 
\[
n\in {\mathbb{N}}\setminus\{0,1\}
\]
and a periodicity cell
\[
Q\equiv\Pi_{j=1}^{n}]0,q_{jj}[\,,
\]
with
 \[
 (q_{11},{...},q_{nn})\in]0,+\infty[^{n}\, .
\]
Then we denote by $q$ the diagonal matrix
\begin{equation}\label{diag}
q\equiv \left(
\begin{array}{cccc}
q_{11} &   0 & {...} & 0   
\\
0          &q_{22} &{...} & 0
\\
{...} & {...} & {...} & {...}  
\\
0& 0 & {...} & q_{nn}
\end{array}\right)
\end{equation}
and by $m_{n}(Q)$ the $n$-dimensional  measure of the fundamental cell $Q$. Clearly, 
\[
q {\mathbb{Z}}^{n}\equiv  \{qz:\,z\in{\mathbb{Z}}^{n}\}
\]
is the set of vertices of a periodic subdivision of ${\mathbb{R}}^{n}$ corresponding to the fundamental cell $Q$.  Let ${\mathrm{cl}}\Omega\subseteq Q$. Then we introduce the periodic domains
\[
{\mathbb{S}}[\Omega] \equiv \bigcup_{z\in {\mathbb{Z}}^{n}}\left(
qz+\Omega
\right)\,,
\qquad
{\mathbb{S}}[\Omega]^{-} \equiv
{\mathbb{R}}^{n}\setminus{\mathrm{cl}}{\mathbb{S}} [\Omega]\,.
\]
A function $u^{i}$  from ${\mathrm{cl}}{\mathbb{S}}[\Omega] $ to ${\mathbb{R}}$
is $q$-periodic if
\[
u^{i}(x+q_{hh}e_{h})=u^{i}(x)\qquad\forall x\in {\mathrm{cl}}{\mathbb{S}}[\Omega] \,,
\]
for all $h\in\{1,{...},n\}$, and  a function $u^{o}$  from ${\mathrm{cl}}{\mathbb{S}}[\Omega]^{-}$ to ${\mathbb{R}}$
is $q$-periodic if
\[
u^{o}(x+q_{hh}e_{h})=u^{o}(x)\qquad\forall x\in {\mathrm{cl}}{\mathbb{S}}[\Omega]^{-}\,,
\]
for all $h\in\{1,{...},n\}$.   Here $\{e_{1}$,{...}, $e_{n}\}$ denotes the canonical basis of ${\mathbb{R}}^{n}$.

We now introduce some functional spaces of $q$-periodic functions. If $k\in {\mathbb{N}}\cup\{  \infty  \}$ and $\beta\in]0,1]$, then we set
\[
C^{k}_{q}({\mathrm{cl}}{\mathbb{S}}[\Omega] )
\equiv\left\{
u\in C^{k}_{b}({\mathrm{cl}}{\mathbb{S}}[\Omega] ):\,
u\ {\mathrm{is}}\ q-{\mathrm{periodic}}
\right\}\,,
\]
which we regard as a Banach subspace of $C^{k}_{b}({\mathrm{cl}}{\mathbb{S}}[\Omega] )$, and 
\[
C^{k,\beta}_{q}({\mathrm{cl}}{\mathbb{S}}[\Omega] )
\equiv\left\{
u\in C^{k,\beta}_{b}({\mathrm{cl}}{\mathbb{S}}[\Omega] ):\,
u\ {\mathrm{is}}\ q-{\mathrm{periodic}}
\right\}\,,
\]
which we regard as a Banach subspace of $C^{k,\beta}_{b}({\mathrm{cl}}{\mathbb{S}}[\Omega] )$, and 
\[
C^{k}_{q}({\mathrm{cl}}{\mathbb{S}}[\Omega]^{-})
\equiv\left\{
u\in C^{k}_{b}({\mathrm{cl}}{\mathbb{S}}[\Omega]^{-}):\,
u\ {\mathrm{is}}\ q-{\mathrm{periodic}}
\right\}\,,
\]
which we regard as a Banach subspace of $C^{k}_{b}({\mathrm{cl}}{\mathbb{S}}[\Omega]^{-})$, and
\[
C^{k,\beta}_{q}({\mathrm{cl}}{\mathbb{S}}[\Omega]^{-} )
\equiv\left\{
u\in C^{k,\beta}_{b}({\mathrm{cl}}{\mathbb{S}}[\Omega]^{-} ):\,
u\ {\mathrm{is}}\ q-{\mathrm{periodic}}
\right\}\,,
\]
which we regard as a Banach subspace of $C^{k,\beta}_{b}({\mathrm{cl}}{\mathbb{S}}[\Omega]^{-})$.  We also set
\[
C^{k}_{q}({\mathbb{R}}^{n}\setminus q{\mathbb{Z}}^{n} )\equiv \left\{
u\in C^{k}({\mathbb{R}}^{n}\setminus q{\mathbb{Z}}^{n} ):\,
u(x+qz)=u(x) \ \forall x \in \mathbb{R}^n \setminus q\mathbb{Z}^n\, , \forall z \in \mathbb{Z}^n
\right\}\,.
\]

Next, we turn to introduce the Roumieu classes. For all bounded open subsets $\Omega$ 
of ${\mathbb{R}}^{n}$ and $\rho\in]0,+\infty[$, we set
\[
C^{0}_{\omega,\rho}({\mathrm{cl}}\Omega)\equiv
\biggl\{u\in C^{\infty}({\mathrm{cl}}\Omega):\,
\sup_{\beta\in{\mathbb{N}}^n}\frac{\rho^{|\beta |}}{|\beta |!}\|D^\beta 
u\|_{C^{0}({\mathrm{cl}}\Omega)}<+\infty 
\biggr\}\,,
\]
and
\[
\|u\|_{C^{0}_{\omega,\rho}({\mathrm{cl}}\Omega)}\equiv
\sup_{\beta\in{\mathbb{N}}^n}\frac{\rho^{|\beta |}}{|\beta |!}\|D^\beta 
u\|_{C^{0}({\mathrm{cl}}\Omega)}\qquad\forall u\in
C^{0}_{\omega,\rho}({\mathrm{cl}}\Omega)\,.
\]
As it is well known, the Roumieu class $\left(
C^{0}_{\omega,\rho}({\mathrm{cl}}\Omega),
\|\cdot\|_{C^{0}_{\omega,\rho}({\mathrm{cl}}\Omega)}
\right)$ is a Banach space. \par

 If ${\mathrm{cl}}\Omega\subseteq Q$ and $\rho>0$,  then we set
\[%\label{qroum}
C^{0}_{q,\omega,\rho}({\mathrm{cl}}{\mathbb{S}}[\Omega])
\equiv
\biggl\{u\in C^{\infty}_{q}({\mathrm{cl}}{\mathbb{S}}[\Omega]):\,
\sup_{\beta\in{\mathbb{N}}^n}\frac{\rho^{|\beta |}}{|\beta |!}\|D^\beta 
u\|_{C^{0}({\mathrm{cl}}\Omega)}<+\infty 
\biggr\}\,,
\]
and
\[
\|u\|_{C^{0}_{q,\omega,\rho}({\mathrm{cl}}{\mathbb{S}}[\Omega])}\equiv
\sup_{\beta\in{\mathbb{N}}^n}\frac{\rho^{|\beta |}}{|\beta |!}\|D^\beta 
u\|_{C^{0}(\mathrm{cl}\Omega)}\qquad\forall u\in
C^{0}_{q,\omega,\rho}({\mathrm{cl}}{\mathbb{S}}[\Omega])\,.
\]
Similarly, we set
\[%\label{qroum-}
C^{0}_{q,\omega,\rho}({\mathrm{cl}}{\mathbb{S}}[\Omega]^{-})
\equiv
\biggl\{u\in C^{\infty}_{q}({\mathrm{cl}}{\mathbb{S}}[\Omega]^{-}):\,
\sup_{\beta\in{\mathbb{N}}^n}\frac{\rho^{|\beta |}}{|\beta |!}\|D^\beta 
u\|_{C^{0}({\mathrm{cl}}Q\setminus\Omega)}<+\infty 
\biggr\}\,,
\]
and
\[
\|u\|_{C^{0}_{q,\omega,\rho}({\mathrm{cl}}{\mathbb{S}}[\Omega]^{-})}\equiv
\sup_{\beta\in{\mathbb{N}}^n}\frac{\rho^{|\beta |}}{|\beta |!}\|D^\beta 
u\|_{C^{0}({\mathrm{cl}}Q\setminus\Omega)}\qquad\forall u\in
C^{0}_{q,\omega,\rho}({\mathrm{cl}}{\mathbb{S}}[\Omega]^{-})\,.
\]
The  periodic  Roumieu class  $\left(
C^{0}_{q,\omega,\rho}({\mathrm{cl}}{\mathbb{S}}[\Omega] ),
\|\cdot\|_{C^{0}_{q,\omega,\rho}({\mathrm{cl}}{\mathbb{S}}[\Omega] )}
\right)$ 
and the  periodic  Rou\-mieu class
$\left(
C^{0}_{q,\omega,\rho}({\mathrm{cl}}{\mathbb{S}}[\Omega]^{-}),
\|\cdot\|_{C^{0}_{q,\omega,\rho}({\mathrm{cl}}{\mathbb{S}}[\Omega]^{-})}
\right)$ 
are Banach spaces.

%\appendix

\section{Periodic volume potentials corresponding to general kernels in Roumieu classes}
\label{qvopoke}

We set 
\[
\tilde{Q}\equiv\Pi_{j=1}^{n}]-q_{jj}/2,q_{jj}/2[\,,
\]
and
\[
{\mathbb{Z}}_{Q}\equiv 
\left\{
z\in {\mathbb{Z}}^{n}:\, qz\in \partial Q
\right\}\,.
\]
Obviously, ${\mathbb{Z}}_{Q}$ has $2^{n}$ elements, and 
\[
Q\subseteq\bigcup_{z\in  {\mathbb{Z}}_{Q} }
(qz+{\mathrm{cl}}\tilde{Q} )\,,
\]
and
\[
m_{n}\left(
Q\setminus
\bigcup_{z\in  {\mathbb{Z}}_{Q} }
(qz+ \tilde{Q} )
\right)=0\,.
\]
By exploiting the absolute continuity of a measure associated to an integrable function, we can prove the following elementary lemma (see also   \cite[\S 3]{DaLaMu15}).
\begin{lemma}
Let $f\in L^{1}_{{\mathrm{loc}}}({\mathbb{R}}^{n})$ be $q$-periodic. Then for all $\epsilon>0$ there exists $\delta>0$ such that
\[
\int_{x+E}|f|\,dx\leq \epsilon\,,
\]
for all measurable subsets $E$ of $Q$ such that $m_{n}(E)\leq\delta$ and for all $x\in  {\mathbb{R}}^{n}$.
\end{lemma}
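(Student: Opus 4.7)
The plan is to use the $q$-periodicity of $f$ to reduce the family of translates $\{x+E : x\in \mathbb{R}^n,\ E\subseteq Q,\ m_n(E)\le \delta\}$ to a uniformly bounded collection of measurable sets inside a single compact reference region, and then invoke the standard absolute continuity of the Lebesgue integral for an $L^1$ function.

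First, I would fix a bounded reference box, for instance
\[
R\equiv \Pi_{j=1}^{n}]-q_{jj},2q_{jj}[\,,
\]
which contains every sum $\tilde{x}+E$ for $\tilde x\in \mathrm{cl}Q$ and $E\subseteq Q$. Since $f\in L^{1}_{\mathrm{loc}}(\mathbb{R}^{n})$, the restriction $|f|_{|\mathrm{cl}R}$ belongs to $L^{1}(\mathrm{cl}R)$, so the set function $A\mapsto \int_{A}|f|\,dy$ is a finite measure on the measurable subsets of $\mathrm{cl}R$ which is absolutely continuous with respect to $m_{n}$. By the classical absolute continuity of the Lebesgue integral, for every $\epsilon>0$ there exists $\delta>0$ (depending only on $\epsilon$, $f$, and $R$) such that
\[
\int_{A}|f|\,dy\le \epsilon
\]
for every measurable $A\subseteq \mathrm{cl}R$ with $m_{n}(A)\le \delta$.

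Second, given any $x\in\mathbb{R}^{n}$, I would write $x=qz+\tilde x$ with $z\in\mathbb{Z}^{n}$ and $\tilde x\in \mathrm{cl}Q$. The $q$-periodicity of $|f|$, combined with the translation invariance of the Lebesgue measure (change of variables $y\mapsto y-qz$), yields
\[
\int_{x+E}|f(y)|\,dy=\int_{\tilde x+E}|f(y-qz)|\,dy=\int_{\tilde x+E}|f(y)|\,dy\,.
\]
Since $\tilde x+E\subseteq \mathrm{cl}R$ and $m_{n}(\tilde x+E)=m_{n}(E)\le \delta$, the bound from the previous step applies and produces the desired inequality uniformly in $x$.

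The argument is essentially a one-step reduction and I do not expect any serious obstacle. The only point that deserves some care is the justification of the periodicity identity above in measure-theoretic form, because $E$ is only assumed measurable and need not be a translate of a nice set; however this is handled routinely since $|f|$ is $q$-periodic as an element of $L^{1}_{\mathrm{loc}}$ and the translation $y\mapsto y-qz$ is a measure-preserving bijection of $\mathbb{R}^{n}$.
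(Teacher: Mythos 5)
Your proposal is correct and follows exactly the route the paper indicates for this lemma: the paper does not spell out a proof but states that it follows from ``the absolute continuity of a measure associated to an integrable function,'' which is precisely your two-step argument of (1) reducing by $q$-periodicity to translates $\tilde x+E$ with $\tilde x\in\mathrm{cl}Q$, which all lie in a single fixed bounded box, and (2) invoking absolute continuity of $A\mapsto\int_A|f|$ on that box. The only cosmetic slip is in the intermediate change-of-variables display, where the substitution $y\mapsto y-qz$ should produce $|f(y+qz)|$ rather than $|f(y-qz)|$; of course both equal $|f(y)|$ by periodicity, so the conclusion is unaffected.
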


Then we have the following.
\begin{lemma}
\label{acper}
Let $\lambda\in ]0,n[$. Let $h\in C^{0}_{q}({\mathbb{R}}^{n}\setminus q{\mathbb{Z}}^{n} )$. Let
\begin{equation}
\label{acper1}
\sup_{x\in \tilde{Q}\setminus\{0\}} |h(x)|\,|x|^{\lambda}<+\infty\,.
\end{equation}
Then for all $\epsilon>0$ there exists $\delta>0$ such that
\[
\int_{E}|h(x-y)|\,dy\leq\epsilon\,,
\]
for all measurable subsets $E$ of $Q$ such that $m_{n}(E)\leq\delta$ and for all $x\in  {\mathbb{R}}^{n}$.
\end{lemma}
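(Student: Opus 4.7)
The plan is to reduce the statement to the preceding (unnumbered) lemma, which already handles the absolute continuity of the integral for any $q$-periodic $f \in L^1_{\mathrm{loc}}(\mathbb{R}^n)$ uniformly in the base point. So I need only two ingredients: (i) the function $|h|$ is $q$-periodic and locally integrable, and (ii) the set $E - x$ of integration (after the natural change of variables) can be brought into the form $x' + E^*$ with $E^* \subseteq Q$ of measure $\le \delta$.

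\textbf{Step 1 (local integrability).} By the growth condition \eqref{acper1} there is a constant $C \in ]0,+\infty[$ with $|h(y)| \le C |y|^{-\lambda}$ for all $y \in \tilde{Q}\setminus\{0\}$. Since $\lambda < n$, the function $|y|^{-\lambda}$ is integrable on $\tilde{Q}$, so $\int_{\tilde{Q}} |h|\,dy < +\infty$. By the $q$-periodicity of $h$ and the fact that the translates $qz + \tilde{Q}$ cover $\mathbb{R}^n$ up to a set of measure zero, $|h| \in L^1_{\mathrm{loc}}(\mathbb{R}^n)$.

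\textbf{Step 2 (reduction via change of variables).} Fix $\epsilon>0$ and let $\delta>0$ be the one provided by the preceding lemma applied to the $q$-periodic function $|h| \in L^1_{\mathrm{loc}}(\mathbb{R}^n)$. Given a measurable $E \subseteq Q$ with $m_n(E) \le \delta$ and any $x \in \mathbb{R}^n$, the substitution $z = x - y$ gives
\[
\int_E |h(x - y)| \, dy \;=\; \int_{x - E} |h(z)| \, dz.
\]
Set $q_\ast \equiv (q_{11}, \ldots, q_{nn})$, so that $-Q = Q - q_\ast$. Then $E^\ast \equiv -E + q_\ast \subseteq Q$ with $m_n(E^\ast) = m_n(E) \le \delta$, and
\[
x - E \;=\; (x - q_\ast) + E^\ast.
\]

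\textbf{Step 3 (conclusion).} Apply the preceding lemma with $f = |h|$, the base point $x - q_\ast$, and the set $E^\ast$: this yields
\[
\int_{x-E} |h(z)|\, dz \;=\; \int_{(x-q_\ast) + E^\ast} |h(z)|\, dz \;\le\; \epsilon,
\]
which is the required bound, uniform in $x \in \mathbb{R}^n$.

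There is no real obstacle here; the only care needed is in Step 2, where one must verify that the translated set $E^\ast = -E + q_\ast$ actually lies inside $Q$ (this uses precisely the product structure $Q = \prod_j ]0,q_{jj}[$) and that the measure is preserved. The rest is an immediate invocation of the preceding lemma, whose uniformity in the base point is exactly what makes the conclusion hold uniformly in $x$.
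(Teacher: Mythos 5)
Your proof is correct and follows essentially the same route as the paper: establish local integrability of $h$ from the growth condition \eqref{acper1} together with periodicity, then invoke the preceding absolute-continuity lemma. The only (minor) technical difference is in how the reflection is handled: the paper applies that lemma to the $q$-periodic function $h(-\cdot)$ with base point $-x$ and set $E\subseteq Q$, whereas you keep $|h|$ and translate the reflected set $-E$ by the vector $(q_{11},\ldots,q_{nn})$ to bring it back into $Q$; the two devices are equivalent.
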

\begin{proof} We first prove that $h_{|Q}\in L^{1}(Q)$. Condition (\ref{acper1}) implies the summability of $h$ in $\tilde{Q}$. Moreover, by the periodicity of $h$ we have 
\begin{equation}
\label{acper2}
\int_{Q}|h(\eta)|\,d\eta\leq \sum_{z\in {\mathbb{Z}}_{Q} }
\int_{qz+\tilde{Q} }|h(\eta)|\,d\eta= 2^{n}\int_{ \tilde{Q} }|h(\eta)|\,d\eta\,.
\end{equation}
Hence, the summability of $h$ in $Q$ follows by (\ref{acper2}).  Since $h_{|Q}\in L^{1}(Q)$ and $h$ is $q$-periodic, we have $h\in L^{1}_{{\mathrm{loc}}}({\mathbb{R}}^{n})$. Then   $h(-\cdot)$ is also locally integrable and $q$-periodic. Therefore,   there exists $\delta>0$ such that
\[
\int_{ E}|h(x-y)|\,dy=
\int_{-x+E}|h(-\eta)|\,d\eta\leq \epsilon\qquad\forall x\in {\mathbb{R}}^{n}\,,
\]
for all measurable subsets $E$ of $Q$ such that $m_{n}(E)\leq\delta$. \end{proof} 

\vspace{\baselineskip}

Next we introduce the following class of $q$-periodic functions which are singular in the punctured periodicity cell  $\tilde{Q}\setminus\{0\}$. 
\begin{definition}
\label{qa0r}
Let $\lambda\in]0,+\infty[$.  Then we denote by 
$A^{0}_{q,\lambda} $ the set of the functions
$h\in {C^{0}_{q}({\mathbb{R}}^{n} \setminus q{\mathbb{Z}}^{n})}$ such that
\[
\sup_{x\in \tilde{Q}\setminus\{0\} }
|h(x)|\,|x|^{\lambda}<+\infty\,, 
\]
and we set
\[
\|h\|_{A^{0}_{q,\lambda} }\equiv
 \sup_{x\in \tilde{Q}\setminus\{0\} }
|h(x)|\,|x|^{\lambda}
\qquad\forall h\in A^{0}_{q,\lambda} 
\,.
\]
\end{definition}
One can readily verify that $(A^{0}_{q,\lambda} , \|\cdot\|_{A^{0}_{q,\lambda} })$ is a Banach space. 
\begin{proposition}
\label{qopoi}
Let $\lambda\in]0,n[$. Let $\Omega$ be a bounded open subset of 
${\mathbb{R}}^{n}$ such that ${\mathrm{cl}}\Omega\subseteq Q$. 
Then the following statements hold.
\begin{enumerate}
\item[(i)] If $(h,\varphi)\in  A^{0}_{q,\lambda} \times L^{\infty}(\Omega)$ and if $x\in {\mathbb{R}}^{n}$, then the function from $\Omega$  to ${\mathbb{R}}$ which takes $y\in\Omega$ to $h(x-y)\varphi(y)$ is integrable.
\item[(ii)]  If $(h,\varphi)\in  A^{0}_{q,\lambda} \times L^{\infty}(\Omega)$, then the function  ${\mathcal{P}}^{+}[h,\varphi]$ from ${\mathbb{R}}^{n}$ to ${\mathbb{R}}$ which takes
$x\in {\mathbb{R}}^{n}$  to 
\[
{\mathcal{P}}^{+}[h,\varphi](x)\equiv\int_{ \Omega }h(x-y)\varphi (y)\,dy\qquad\forall x\in{\mathbb{R}}^{n}\,,
\]
is continuous and $q$-periodic. 
\item[(iii)] ${\mathcal{P}}^{+}[h,\varphi]$ is bounded and 
\[
\|{\mathcal{P}}^{+}[h,\varphi]\|_{L^\infty(
{\mathbb{R}}^{n})}
\leq 2^{n} \int_{\tilde{Q} }|y|^{-\lambda}\,dy
\|h\|_{A^{0}_{q,\lambda} } 
\|\varphi\|_{L^{\infty}(\Omega)}
 \,,
\]
for all $(h,\varphi)\in A^{0}_{q,\lambda} \times L^{\infty}(\Omega)$.
\item[(iv)] If $(h,\varphi)\in  A^{0}_{q,\lambda} \times L^{\infty}(Q\setminus {\mathrm{cl}} \Omega)$ and if $x\in   {\mathbb{R}}^{n}$, then the function from $Q\setminus {\mathrm{cl}} \Omega$  to ${\mathbb{R}}$ which takes $y\in Q\setminus {\mathrm{cl}} \Omega$ to $h(x-y)\varphi(y)$ is integrable.
\item[(v)]  If $(h,\varphi)\in  A^{0}_{q,\lambda} \times L^{\infty}(Q\setminus {\mathrm{cl}} \Omega)$, then the function  ${\mathcal{P}}^{-}[h,\varphi]$ from ${\mathbb{R}}^{n}$ to ${\mathbb{R}}$ which takes
$x\in {\mathbb{R}}^{n}$  to 
\[
{\mathcal{P}}^{-}[h,\varphi](x)\equiv\int_{ 
Q\setminus {\mathrm{cl}} \Omega
 }h(x-y)\varphi (y)\,dy\qquad\forall x\in{\mathbb{R}}^{n}\,,
\]
is continuous and $q$-periodic. 
\item[(vi)] ${\mathcal{P}}^{-}[h,\varphi]$ is bounded and 
\[
\|{\mathcal{P}}^{-}[h,\varphi]\|_{L^\infty(
{\mathbb{R}}^{n})}
\leq 2^{n} \int_{\tilde{Q} }|y|^{-\lambda}\,dy
\|h\|_{A^{0}_{q,\lambda} } 
\|\varphi\|_{L^{\infty}(Q\setminus {\mathrm{cl}} \Omega)}
 \,,
\]
for all $(h,\varphi)\in A^{0}_{q,\lambda} \times L^{\infty}(Q\setminus {\mathrm{cl}} \Omega)$.
\end{enumerate}
\end{proposition}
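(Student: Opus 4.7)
The plan is to prove the six statements in parallel for $\mathcal{P}^+$ and $\mathcal{P}^-$, since the arguments are identical once one observes that both $\Omega$ and $Q\setminus\mathrm{cl}\,\Omega$ are bounded measurable subsets of $Q$. The core input throughout is the pointwise bound $|h(z)|\leq \|h\|_{A^{0}_{q,\lambda}}|z|^{-\lambda}$ on $\tilde Q\setminus\{0\}$ (from Definition~\ref{qa0r}), combined with Lemma~\ref{acper} and the computation in (\ref{acper2}) that $\int_{Q}|h|\,dy\leq 2^{n}\int_{\tilde Q}|h|\,dy\leq 2^{n}\|h\|_{A^{0}_{q,\lambda}}\int_{\tilde Q}|y|^{-\lambda}\,dy$.

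For (i) and (iv), the integrability of $y\mapsto h(x-y)\varphi(y)$ is immediate once one knows $h\in L^{1}_{\mathrm{loc}}(\mathbb{R}^{n})$, which is established inside the proof of Lemma~\ref{acper}. Multiplying by $\varphi\in L^{\infty}$ and restricting to the bounded sets $\Omega$ or $Q\setminus\mathrm{cl}\,\Omega$ preserves integrability. For the $L^{\infty}$ bounds (iii) and (vi), I would estimate
\[
|\mathcal{P}^{\pm}[h,\varphi](x)|\leq \|\varphi\|_{L^{\infty}}\int_{Q}|h(x-y)|\,dy,
\]
then perform the change of variable $\eta=x-y$ and exploit the $q$-periodicity of $|h|$, which lets me replace the shifted cell $x-Q$ by $Q$ at the cost of using periodicity, obtaining $\int_{Q}|h(\eta)|\,d\eta$. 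The inequality (\ref{acper2}) and Definition~\ref{qa0r} then yield the displayed constant $2^{n}\int_{\tilde Q}|y|^{-\lambda}\,dy\,\|h\|_{A^{0}_{q,\lambda}}$.

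The $q$-periodicity in (ii) and (v) is a direct consequence of the $q$-periodicity of $h$: replacing $x$ by $x+q_{hh}e_{h}$ in the integrand gives back the same function of $y$. The key step, and the main obstacle, is the continuity of $\mathcal{P}^{\pm}[h,\varphi]$ on $\mathbb{R}^{n}$, because the integrand $h(x-y)$ is singular at $y\in x+q\mathbb{Z}^{n}$. I would fix $x_{0}\in\mathbb{R}^{n}$ and $\epsilon>0$ and, using Lemma~\ref{acper} applied to the function $h$, choose $\delta>0$ so that $\int_{E}|h(x-y)|\,dy\leq\epsilon$ uniformly in $x\in\mathbb{R}^{n}$ whenever $E\subseteq Q$ is measurable with $m_{n}(E)\leq\delta$. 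Then, for $x$ in a small neighbourhood $U$ of $x_{0}$, I would pick a radius $r>0$ such that the union $V_{r}$ of the balls of radius $r$ around the (finitely many) singular points $x+qz\in Q$ and $x_{0}+qz\in Q$ has measure at most $\delta$ for all $x\in U$.

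On the singular piece $\Omega\cap V_{r}$ (respectively $(Q\setminus\mathrm{cl}\,\Omega)\cap V_{r}$), the difference $|h(x-y)-h(x_{0}-y)|$ is controlled by the sum of two such integrals, each bounded by $\epsilon\|\varphi\|_{L^{\infty}}$. On the complementary piece, the function $(x,y)\mapsto h(x-y)$ is continuous on a compact set and hence uniformly continuous, so the difference tends to $0$ uniformly as $x\to x_{0}$ and contributes an arbitrarily small quantity times $m_{n}(\Omega)\|\varphi\|_{L^{\infty}}$. Combining the two estimates gives the continuity of $\mathcal{P}^{\pm}[h,\varphi]$ at $x_{0}$, completing the proof. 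The same splitting strategy works verbatim for $\mathcal{P}^{-}$, so no additional argument is needed for (iv)–(vi) beyond replacing the domain of integration.
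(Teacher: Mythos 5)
Your proposal is correct and mirrors the paper's argument in all essentials: integrability and the $L^\infty$ bound come from the weak-singularity estimate together with the $2^n$ cell count and $q$-periodicity; $q$-periodicity of $\mathcal{P}^{\pm}[h,\varphi]$ is inherited from that of $h$; and continuity is obtained by isolating a small-measure neighbourhood of the singular points, whose contribution is controlled uniformly via Lemma~\ref{acper}, and invoking uniform continuity of $h$ on the compact complement. The only cosmetic difference is that the paper works with the boxes $x_{0}+qz+\delta\,\mathrm{cl}\tilde{Q}$ centred at the singularities of $h(x_{0}-\cdot)$ alone, restricting $x$ to $x_{0}+\tfrac{\delta}{2}\tilde{Q}$ so that the singularities of $h(x-\cdot)$ remain inside them, whereas you enclose both the $x_{0}+qz$ and the $x+qz$ in a union of balls $V_{r}$ depending on $x$ --- both choices lead to the same estimate.
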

\begin{proof} If $(h,\varphi)\in A^{0}_{q,\lambda} \times L^{\infty}(\Omega)$, then we have 
\begin{equation}
\label{qopoi5a}
|h(x-y)\varphi(y)|\leq \sum_{z\in {\mathbb{Z}}_{Q}}
\chi_{qz+\tilde{Q} }(y) |h(x-y)|   \|\varphi\|_{L^{\infty}(\Omega)}
\qquad{\mathrm{for\ a.a.}}\ y\in \Omega\, ,
\end{equation}
for all $x \in \mathbb{R}^n$. Since $h$ is $q$-periodic and $\tilde{Q}$ is a $q$-periodicity cell, we have 
\begin{eqnarray}
\label{qopoi5aa}
\lefteqn{
\int_{qz+\tilde{Q}}|h(x-y)|\,dy
=
 \int_{x+qz+\tilde{Q}}|h(x-y)|\,dy
}
\\ \nonumber
&&\qquad
=\int_{qz+\tilde{Q}}|h( -y)|\,dy
=\int_{ \tilde{Q}}|h( -y)|\,dy
\\ \nonumber
&&\qquad
 =\int_{ \tilde{Q}}|h( -y)|\, |-y|^{\lambda}\frac{1}{ | y|^{\lambda}}\,dy
\leq
 \int_{ \tilde{Q}}\frac{1}{ | y|^{\lambda}}\,dy
\|h\|_{A^{0}_{q,\lambda} } 
<+\infty\qquad\forall x\in {\mathbb{R}}^{n}\,,
\end{eqnarray}
for all $z\in\mathbb{Z}_Q$. Since ${\mathbb{Z}}_{Q}$ has $2^{n}$ elements, statement (i) follows. Next we prove the continuity of ${\mathcal{P}}^{+}[h,\varphi]$. Let $x_{0}\in {\mathbb{R}}^{n}$. Let $\epsilon>0$. By Lemma \ref{acper}, there exists $\delta\in ]0,\frac{1}{2}[$ such that
\begin{equation}
\label{qopoi6}
\int_{(\xi+\delta\tilde{Q})\cap Q}|h(x-y)|\,dy\leq \epsilon/2^{n+1}
\qquad\forall (\xi,x)\in   {\mathbb{R}}^{n}\times  {\mathbb{R}}^{n}\,.  
\end{equation}
Since  $\delta\in ]0,1/2[$, the set
\begin{equation}
\label{qopoi7}
{\mathbb{Z}}(x_{0},\delta)\equiv
\left\{
z\in {\mathbb{Z}}^{n}:\, 
(x_{0}+qz+\delta{\mathrm{cl}}\tilde{Q})\cap{\mathrm{cl} }Q\neq\emptyset
\right\}
\end{equation}
has at most $2^{n}$ elements. Since the function $|h(x-y)|$ is continuous on the compact  set
\begin{equation}
\label{qopoi8}
\left(x_{0}+\frac{\delta}{2}{\mathrm{cl}}\tilde{Q}\right)
\times
\left(
{\mathrm{cl}}Q\setminus
\left(
\bigcup_{z\in {\mathbb{Z}}(x_{0},\delta)
}
(x_{0}+qz+\delta \tilde{Q})
\right)
\right)\,,
\end{equation}
the function $|h(x-y)|$ has a maximum $\gamma(x_{0},\delta)$ on such a set. Then by (\ref{qopoi6}), we have that
\begin{eqnarray}
\nonumber
\lefteqn{
\left|
{\mathcal{P}}^{+}_{q}[h,\varphi](x)
-
{\mathcal{P}}^{+}_{q}[h,\varphi](x_{0})
\right|
}
\\ \nonumber
&&\qquad
\leq
\biggl|\biggr.
\int_{
\Omega \setminus
\bigcup_{z\in {\mathbb{Z}}(x_{0},\delta)
}
(x_{0}+qz+\delta{\mathrm{cl}}\tilde{Q})
}h(x-y)\varphi(y)\,dy
\\ \nonumber
&&\qquad\qquad\qquad
-
\int_{
\Omega \setminus
\bigcup_{z\in {\mathbb{Z}}(x_{0},\delta)
}
(x_{0}+qz+\delta{\mathrm{cl}}\tilde{Q})
}h(x_{0}-y)\varphi(y)\,dy
\biggl.\biggr| 
\\ \nonumber
&& \qquad\qquad
+\int_{
\bigcup_{z\in {\mathbb{Z}}(x_{0},\delta)
}
(x_{0}+qz+\delta{\mathrm{cl}}\tilde{Q})
\cap Q
}|h(x-y)|\,dy\|\varphi\|_{L^{\infty}(\Omega)}
\\ \nonumber
&&\qquad\qquad\qquad
+\int_{
\bigcup_{z\in {\mathbb{Z}}(x_{0},\delta)
}
(x_{0}+qz+\delta{\mathrm{cl}}\tilde{Q})
\cap Q
}|h(x_{0}-y)|\,dy\|\varphi\|_{L^{\infty}(\Omega)}
\\ \nonumber
&&\qquad
\leq
\biggl|\biggr.
\int_{
\Omega \setminus
\bigcup_{z\in {\mathbb{Z}}(x_{0},\delta)
}
(x_{0}+qz+\delta{\mathrm{cl}}\tilde{Q})
}h(x-y)\varphi(y)\,dy
\\ \nonumber
&&\qquad\qquad\qquad
-
\int_{
\Omega \setminus
\bigcup_{z\in {\mathbb{Z}}(x_{0},\delta)
}
(x_{0}+qz+\delta{\mathrm{cl}}\tilde{Q})
}h(x_{0}-y)\varphi(y)\,dy
\biggl.\biggr| 
\\ \nonumber
&& \qquad
+
\left(
2^{n}(\epsilon/2^{n+1})
+
2^{n}(\epsilon/2^{n+1})
\right)\|\varphi\|_{L^{\infty}(\Omega)}
\\ \nonumber
&&\qquad
\leq \biggl\{\biggr.
\int_{
\Omega \setminus
\bigcup_{z\in {\mathbb{Z}}(x_{0},\delta)
}
(x_{0}+qz+\delta{\mathrm{cl}}\tilde{Q})
}|h(x-y)-h(x_{0}-y)|\,dy
\\ \nonumber
&&\qquad\qquad\qquad
+2^{n+1}(\epsilon/2^{n+1})
\biggl.\biggr\}\|\varphi\|_{L^{\infty}(\Omega)}
\qquad\forall x\in \left(x_{0}+\frac{\delta}{2}\tilde{Q}\right)
\,,
\end{eqnarray}
and that 
\begin{equation}
\label{qopoi10}
|h(x-y)|\leq \gamma(x_{0},\delta)\,,\qquad
|h(x-y)-h(x_{0}-y)|\leq 2\gamma(x_{0},\delta)\,,
\end{equation}
for all $x\in x_{0}+\frac{\delta}{2}\tilde{Q}$ and $y\in 
 \Omega \setminus
\bigcup_{z\in {\mathbb{Z}}(x_{0},\delta)
}
(x_{0}+qz+\delta{\mathrm{cl}}\tilde{Q})$. Then the above mentioned continuity of $h$  and the Dominated Convergence Theorem imply that 
\[
\limsup_{x\to x_{0}}\left|
{\mathcal{P}}^{+}_{q}[h,\varphi](x)-{\mathcal{P}}^{+}_{q}[h,\varphi](x_{0})
\right|\leq \|\varphi\|_{L^{\infty}(\Omega)}\epsilon\,,
\]
and the continuity of ${\mathcal{P}}^{+}_{q}[h,\varphi]$
at $x_{0}$ follows. Since ${\mathbb{Z}}_{Q}$ has $2^{n}$ elements, inequalities (\ref{qopoi5a}) and (\ref{qopoi5aa})
imply the validity of statement (iii). 

Next we consider statement (iv). If $(h,\varphi)\in  A^{0}_{q,\lambda} \times L^{\infty}(Q\setminus {\mathrm{cl}} \Omega)$, then we have 
\begin{equation}
\label{qopoi10a}
|h(x-y)\varphi(y)|\leq\sum_{z\in {\mathbb{Z}}_{Q}}\chi_{qz+\tilde{Q} }(y)
|h(x-y)|\|\varphi\|_{L^{\infty}(Q\setminus {\mathrm{cl}}\Omega)}
\qquad{\mathrm{for\ a.a.}}\ y\in Q\setminus {\mathrm{cl}}\Omega\,,
\end{equation}
for all $x\in {\mathbb{R}}^{n}$.  By (\ref{qopoi5aa}), we have
\begin{equation}
\label{qopoi11}
\int_{qz+\tilde{Q}}|h(x-y)|\,dy
\leq
 \int_{ \tilde{Q}}\frac{1}{ | y|^{\lambda}}\,dy
\|h\|_{A^{0}_{q,\lambda} }\qquad\forall x\in {\mathbb{R}}^{n}\,.
\end{equation}
Since ${\mathbb{Z}}_{Q}$ has $2^{n}$ elements,  statement (iv) follows. We now consider statement (v). Let $x_{0}\in {\mathbb{R}}^{n}$. Let $\epsilon>0$. Let $\delta\in]0,1/2[$ be as in (\ref{qopoi6}). Since 
  $\delta<1/2$, the set ${\mathbb{Z}}(x_{0},\delta)$ has at most $2^{n}$ elements (see (\ref{qopoi7})).
 Since the function $|h(x-y)|$ is continuous on the compact set in (\ref{qopoi8}), the function  $|h(x-y)|$  has a maximum $\gamma(x_{0},\delta)$ on the set in (\ref{qopoi8}). Then (\ref{qopoi6}) implies that
 \begin{eqnarray}
\nonumber
\lefteqn{
\left|
{\mathcal{P}}^{-}_{q}[h,\varphi](x)
-
{\mathcal{P}}^{-}_{q}[h,\varphi](x_{0})
\right|
}
\\ \nonumber
&&\qquad
\leq
\biggl|\biggr.
\int_{
(Q\setminus {\mathrm{cl}}\Omega) \setminus
\bigcup_{z\in {\mathbb{Z}}(x_{0},\delta)
}
(x_{0}+qz+\delta{\mathrm{cl}}\tilde{Q})
}h(x-y)\varphi(y)\,dy
\\ \nonumber
&&\qquad\qquad\qquad
-
\int_{
(Q\setminus {\mathrm{cl}}\Omega)\setminus
\bigcup_{z\in {\mathbb{Z}}(x_{0},\delta)
}
(x_{0}+qz+\delta{\mathrm{cl}}\tilde{Q})
}h(x_{0}-y)\varphi(y)\,dy
\biggl.\biggr| 
\\ \nonumber
&& \qquad\qquad
+\int_{
\bigcup_{z\in {\mathbb{Z}}(x_{0},\delta)
}
(x_{0}+qz+\delta{\mathrm{cl}}\tilde{Q})
\cap Q
}|h(x-y)|\,dy\|\varphi\|_{L^{\infty}(Q\setminus {\mathrm{cl}}\Omega)}
\\ \nonumber
&&\qquad\qquad\qquad
+\int_{
\bigcup_{z\in {\mathbb{Z}}(x_{0},\delta)
}
(x_{0}+qz+\delta{\mathrm{cl}}\tilde{Q})
\cap Q
}|h(x_{0}-y)|\,dy\|\varphi\|_{L^{\infty}(Q\setminus {\mathrm{cl}}\Omega)}
\\ \nonumber
&&\qquad
\leq
\biggl|\biggr.
\int_{
(Q\setminus {\mathrm{cl}}\Omega) \setminus
\bigcup_{z\in {\mathbb{Z}}(x_{0},\delta)
}
(x_{0}+qz+\delta{\mathrm{cl}}\tilde{Q})
}h(x-y)\varphi(y)\,dy
\\ \nonumber
&&\qquad\qquad\qquad
-
\int_{
(Q\setminus {\mathrm{cl}}\Omega) \setminus
\bigcup_{z\in {\mathbb{Z}}(x_{0},\delta)
}
(x_{0}+qz+\delta{\mathrm{cl}}\tilde{Q})
}h(x_{0}-y)\varphi(y)\,dy
\biggl.\biggr| 
\\ \nonumber
&& \qquad
+
\left(
2^{n}(\epsilon/2^{n+1})
+
2^{n}(\epsilon/2^{n+1})
\right)\|\varphi\|_{L^{\infty}(Q\setminus {\mathrm{cl}}\Omega)}
\\ \nonumber
&&\qquad
\leq \biggl\{\biggr.
\int_{
(Q\setminus {\mathrm{cl}}\Omega) \setminus
\bigcup_{z\in {\mathbb{Z}}(x_{0},\delta)
}
(x_{0}+qz+\delta{\mathrm{cl}}\tilde{Q})
\cap Q
}|h(x-y)-h(x_{0}-y)|\,dy
\\ \nonumber
&&\qquad\qquad\qquad
+2^{n+1}(\epsilon/2^{n+1})
\biggl.\biggr\}\|\varphi\|_{L^{\infty}(Q\setminus {\mathrm{cl}}\Omega)}
\qquad\forall 
x\in \left(
x_{0}+\frac{\delta}{2}\tilde{Q}
\right) \,.
\end{eqnarray}
Then inequality (\ref{qopoi10}), and the continuity of $h$ and the Dominated Convergence Theorem imply that 
\[
\limsup_{x\to x_{0}}\left|
{\mathcal{P}}^{-}_{q}[h,\varphi](x)-{\mathcal{P}}^{-}_{q}[h,\varphi](x_{0})
\right|\leq \|\varphi\|_{L^{\infty}(Q\setminus {\mathrm{cl}}\Omega)}\epsilon\,,
\]
and the continuity of ${\mathcal{P}}^{+}_{q}[h,\varphi]$
at $x_{0}$ follows. Since ${\mathbb{Z}}_{Q}$ has  $2^{n}$ elements,  inequalities (\ref{qopoi10a}) and 
 (\ref{qopoi11}) imply the validity of statement (vi). \end{proof} 

\vspace{\baselineskip}

Next we introduce the following.
\begin{definition}
\label{a1r}
Let $\lambda\in]0,+\infty[$. Then we denote by 
$A^{1}_{q,\lambda} $ the set of the functions
$h\in C^{1}_{q}( {\mathbb{R}}^{n} \setminus q{\mathbb{Z}}^{n})$ such that
\[
h\in A^{0}_{q,\lambda} \,,
\qquad
\frac{\partial h}{\partial x_{j}}\in A^{0}_{q,\lambda+1} 
\qquad\forall j\in\{1,{...},n\}\,,
\]
and we set
\[
\|h\|_{A^{1}_{q,\lambda} }\equiv
\|h\|_{A^{0}_{q,\lambda} }
+
\sum_{j=1}^{n}\left\|
\frac{\partial h}{\partial x_{j} }
\right\|_{A^{0}_{q,\lambda+1} }
\qquad \forall h\in A^{1}_{q,\lambda} \,.
\]
\end{definition}
One can readily verify that $(A^{1}_{q,\lambda} , \|\cdot\|_{A^{1}_{q,\lambda} })$ is a Banach space.

\begin{proposition}
\label{q1poi}
Let $\lambda\in]0,n-1[$. Let $\Omega$ be a bounded open subset of 
${\mathbb{R}}^{n}$ such that ${\mathrm{cl}}\Omega\subseteq Q$. Then the following statements hold. 
\begin{enumerate}
\item[(i)] If $(h,\varphi)\in  A^{1}_{q,\lambda} \times L^{\infty}(\Omega)$ and if $x\in {\mathbb{R}}^{n}$, then the functions from $\Omega$  to ${\mathbb{R}}$ which take  $y\in\Omega$ to $h(x-y)\varphi(y)$ 
and to $\frac{\partial h}{\partial x_{j}}(x-y)\varphi(y)$ for $j\in\{1,{...},n\}$ are
 integrable.
\item[(ii)]  If $(h,\varphi)\in  A^{1}_{q,\lambda} \times L^{\infty}(\Omega)$, then ${\mathcal{P}}^{+}[h,\varphi]\in C^{1}_{q}({\mathbb{R}}^{n})$ and 
\begin{equation}
\label{q1poi1}
\frac{\partial  }{\partial x_{j}}{\mathcal{P}}^{+}[h,\varphi]
={\mathcal{P}}^{+}[\frac{\partial h}{\partial x_{j}},\varphi]
\qquad {\mathrm{in}}\ {\mathbb{R}}^{n}\,,
\end{equation}
for all $j\in\{1,{...}, n\}$. 
\item[(iii)] If $(h,\varphi)\in  A^{1}_{q,\lambda} \times L^{\infty}(Q\setminus{\mathrm{cl}} \Omega)$ and if $x\in {\mathbb{R}}^{n}$, then the functions from $Q\setminus{\mathrm{cl}}\Omega$  to ${\mathbb{R}}$ which take  $y\in Q\setminus{\mathrm{cl}}\Omega$ to $h(x-y)\varphi(y)$ 
and to $\frac{\partial h}{\partial x_{j}}(x-y)\varphi(y)$ for $j\in\{1,{...},n\}$ are
 integrable.
\item[(iv)]  If $(h,\varphi)\in  A^{1}_{q,\lambda} \times L^{\infty}(Q\setminus{\mathrm{cl}}\Omega)$, then ${\mathcal{P}}^{-}[h,\varphi]\in C^{1}_{q}({\mathbb{R}}^{n})$ and 
\[
\frac{\partial  }{\partial x_{j}}{\mathcal{P}}^{-}[h,\varphi]
={\mathcal{P}}^{-}[\frac{\partial h}{\partial x_{j}},\varphi]
\qquad {\mathrm{in}}\ {\mathbb{R}}^{n}\,,
\]
for all $j\in\{1,{...}, n\}$.
\end{enumerate}
\end{proposition}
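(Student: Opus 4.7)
The plan is to reduce statements (i) and (iii) to Proposition \ref{qopoi}, and to prove the differentiability statements (ii) and (iv) by a difference quotient argument based on the Fundamental Theorem of Calculus, controlling the singularity of $\partial_j h$ at the lattice $q{\mathbb{Z}}^n$ via Lemma \ref{acper}.

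Statements (i) and (iii) are immediate: since $\lambda\in]0,n-1[$, both $\lambda$ and $\lambda+1$ lie in $]0,n[$, so Proposition \ref{qopoi}(i) applied with exponent $\lambda$ to $h\in A^{0}_{q,\lambda}$ and with exponent $\lambda+1$ to each $\partial h/\partial x_{j}\in A^{0}_{q,\lambda+1}$ yields the integrability on $\Omega$; the case of $Q\setminus{\mathrm{cl}}\Omega$ in (iii) uses Proposition \ref{qopoi}(iv) in the same way.

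For (ii), Proposition \ref{qopoi}(ii) applied to $h\in A^{0}_{q,\lambda}$ and to each $\partial h/\partial x_{j}\in A^{0}_{q,\lambda+1}$ shows that ${\mathcal{P}}^{+}[h,\varphi]$ and every ${\mathcal{P}}^{+}[\partial h/\partial x_{j},\varphi]$ lie in $C^{0}_{q}({\mathbb{R}}^{n})$. Hence it suffices to establish the pointwise identity (\ref{q1poi1}), after which the continuity of the partial derivatives yields ${\mathcal{P}}^{+}[h,\varphi]\in C^{1}_{q}({\mathbb{R}}^{n})$. Fix $x_{0}\in{\mathbb{R}}^{n}$ and $j\in\{1,\dots,n\}$. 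For sufficiently small $t\neq 0$, the segment $\{x_{0}+s t e_{j}-y:\, s\in[0,1]\}$ avoids $q{\mathbb{Z}}^{n}$ for a.e.\ $y\in\Omega$, so the Fundamental Theorem of Calculus together with Fubini's theorem (justified by the integrability bounds obtained exactly as in (\ref{qopoi5aa})) yields
\begin{equation*}
\frac{{\mathcal{P}}^{+}[h,\varphi](x_{0}+te_{j})-{\mathcal{P}}^{+}[h,\varphi](x_{0})}{t} - {\mathcal{P}}^{+}[\tfrac{\partial h}{\partial x_{j}},\varphi](x_{0}) = \int_{\Omega}\varphi(y)\int_{0}^{1}\Bigl(\tfrac{\partial h}{\partial x_{j}}(x_{0}+ste_{j}-y)-\tfrac{\partial h}{\partial x_{j}}(x_{0}-y)\Bigr)\,ds\,dy.
\end{equation*}
Given $\epsilon>0$, Lemma \ref{acper} applied to $\partial h/\partial x_{j}\in A^{0}_{q,\lambda+1}$ (valid because $\lambda+1<n$) produces $\delta\in]0,1/2[$ such that the integral of $|\partial h/\partial x_{j}(\xi-\cdot)|$ over any measurable subset of $Q$ of measure $\leq\delta$ is at most $\epsilon$, uniformly in $\xi$. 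Excising from $\Omega$ the at most $2^{n}$ neighbourhoods $x_{0}+qz+\delta\tilde{Q}$ with $z\in{\mathbb{Z}}(x_{0},\delta)$, as in the proof of Proposition \ref{qopoi}(ii), the contribution of the excised region is bounded by $C\epsilon\|\varphi\|_{L^{\infty}(\Omega)}$ uniformly for $|t|<\delta/2$, while on the complement $\partial h/\partial x_{j}$ is uniformly continuous on a compact set and the Dominated Convergence Theorem forces the remaining term to vanish as $t\to 0$. This gives (\ref{q1poi1}) at $x_{0}$, and hence (ii).

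Statement (iv) is proved by the identical argument with $\Omega$ replaced by $Q\setminus{\mathrm{cl}}\Omega$ and Proposition \ref{qopoi}(v)--(vi) invoked in place of (ii)--(iii). The principal obstacle is the singular behaviour of $\partial h/\partial x_{j}$ at the periodic lattice, and the mechanism that overcomes it is the hypothesis $\lambda<n-1$, which is precisely what places $\partial h/\partial x_{j}$ in $A^{0}_{q,\lambda+1}$ with $\lambda+1\in]0,n[$ so that Lemma \ref{acper} applies uniformly in the translation parameter.
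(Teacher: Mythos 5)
Your reduction of (i) and (iii) to Proposition \ref{qopoi} is exactly what the paper does. The interesting difference lies in (ii) and (iv). The paper's argument is a mollification: it introduces a smooth $q$-periodic cutoff $g_\delta$ vanishing on $\bigcup_z(qz+{\mathbb{B}}_n(0,\delta/8))$, sets $u^+_\delta(x)=\int_\Omega g_\delta(x-y)h(x-y)\varphi(y)\,dy$, differentiates under the integral sign (which is classical since the truncated kernel is $C^1$), and then shows that $u^+_\delta\to{\mathcal{P}}^+[h,\varphi]$ and $\partial_j u^+_\delta\to{\mathcal{P}}^+[\partial_jh,\varphi]$ uniformly as $\delta\to0$, using the quantitative estimates $\int_{\delta\tilde Q}|y|^{-\lambda-1}\,dy=\delta^{n-\lambda-1}\int_{\tilde Q}|y|^{-\lambda-1}\,dy$ and $\frac1\delta\int_{\delta\tilde Q}|y|^{-\lambda}\,dy=\delta^{n-\lambda-1}\int_{\tilde Q}|y|^{-\lambda}\,dy$, where $n-\lambda-1>0$ is precisely what makes the extra term from $\partial_j g_\delta$ vanish. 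You instead attack the difference quotient directly: you write, by the Fundamental Theorem of Calculus and Fubini (both of which must be justified, and you do so correctly — the bad set of $y$'s where the segment meets $q{\mathbb{Z}}^n$ has measure zero since $n\ge2$, and the double integral is finite by the $A^0_{q,\lambda+1}$ bound), the discrepancy between the quotient and ${\mathcal{P}}^+[\partial_jh,\varphi](x_0)$ as an integral in $(s,y)$, and then you split according to proximity to the shifted lattice, controlling the near region \emph{uniformly in the translation} by Lemma \ref{acper} and the far region by uniform continuity on a compact set. Both proofs turn the same screw ($\lambda<n-1$, giving $\lambda+1<n$), but yours trades the explicit construction of $g_\delta$ and the uniform-limit-of-$C^1$-functions theorem for FTC and Fubini; it is a bit more elementary and arguably more transparent about why the hypothesis $\lambda<n-1$ is needed, while the paper's version has the advantage of simultaneously yielding uniform convergence on all of ${\mathbb{R}}^n$ in one stroke, which is convenient for the $q$-periodicity bookkeeping.

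One small notational infelicity in your write-up: you use the single letter $\delta$ both for the measure threshold coming out of Lemma \ref{acper} and for the dilation parameter in the excised cubes $x_0+qz+\delta\tilde Q$. Since $m_n\bigl((x_0+qz+\delta\tilde Q)\cap Q\bigr)\le\delta^n m_n(Q)$ rather than $\delta$, these are not the same quantity; you should first obtain $\delta_1$ from Lemma \ref{acper} and then pick $\delta_2\in\,]0,1/2[$ small enough that $2^n\delta_2^n m_n(Q)\le\delta_1$, excising the cubes $x_0+qz+\delta_2\tilde Q$. This is cosmetic and does not affect the validity of the argument.
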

\begin{proof} Statements (i), (iii) follow by Proposition \ref{qopoi} (i), (iv) applied to the functions $h$, $\frac{\partial h}{\partial x_{j}}$. 

We now prove statement (ii). By Proposition \ref{qopoi} (ii), the two functions ${\mathcal{P}}^{+}[h,\varphi]$ and ${\mathcal{P}}^{+}[\frac{\partial h}{\partial x_{j}},\varphi]$
are continuous and $q$-periodic in ${\mathbb{R}}^{n}$  for all $j\in\{1,{...},n\}$. Then it suffices to prove that $\frac{\partial}{\partial x_{j}}{\mathcal{P}}^{+}[h,\varphi]$ exists  and that (\ref{q1poi1}) holds. In order to prove (\ref{q1poi1}), we proceed by a standard argument.  Let $g\in C^{\infty}(  {\mathbb{R}}  )$ be such that
\[
g(t)=0\qquad\forall t\in ]-\infty,1/8]\,,
\qquad
g(t)=1\qquad\forall t\in [1/4,+\infty[\,.
\]
Then we set
\begin{equation}
\label{q1poi3}
g_{\delta}(y)\equiv\left\{
\begin{array}{ll}
g(\frac{|y-qz|}{\delta})\qquad & \forall y\in qz+{\mathbb{B}}_{n}(0,\delta/4)\,,\qquad\forall z\in {\mathbb{Z}}^{n}\,,
\\
1 &  \forall y\in {\mathbb{R}}^{n}\setminus\bigcup_{z\in {\mathbb{Z}}^{n}}\left(qz+{\mathbb{B}}_{n}(0,\delta/4)\right)\,,
\end{array}
\right.
\end{equation}
for all $\delta\in]0,1[$. 
By definition, $g_{\delta}\in C^{\infty}_{q}({\mathbb{R}}^{n})$ and
\begin{eqnarray}\nonumber
&&g_{\delta}(x)=0\qquad\forall y\in \bigcup_{z\in {\mathbb{Z}}^{n}}\left(qz+{\mathbb{B}}_{n}(0,\delta/8)\right)\,,
\\\nonumber
\label{q1poi3b}
&&g_{\delta}(x)=1\qquad\forall y\in {\mathbb{R}}^{n}\setminus\bigcup_{z\in {\mathbb{Z}}^{n}}\left(qz+{\mathbb{B}}_{n}(0,\delta/4)\right)\,,
\end{eqnarray}
for all $\delta\in]0,1[$. 
Then we set 
\[
u_{\delta}^{+}(x)\equiv \int_{\Omega}
g_{\delta}(x-y)h(x-y)\varphi (y)\,dy\qquad\forall x\in 
{\mathbb{R}}^{n}\,,
\]
for all $\delta\in ]0,1[$. Clearly, $g_{\delta}(x-y)h(x-y)$ is of class $C^{1}$ in the variable $(x,y)\in {\mathbb{R}}^{n}\times {\mathbb{R}}^{n}$. We now show that $u_{\delta}^{+} \in C^{1}_{q}(
{\mathbb{R}}^{n})$ by applying the classical theorem of differentiability  for integrals depending on a parameter. 
 Let $x_{0}\in {\mathbb{R}}^{n}$. Then we have
\begin{eqnarray}
\nonumber
\lefteqn{
\left|
\frac{\partial}{\partial x_{j}}
\left\{
g_{\delta}(x-y)h(x-y)\varphi(y)
\right\}
\right|
}
\\ \nonumber
&&\qquad
\leq
\left|
\frac{\partial}{\partial x_{j}}
\left\{
g_{\delta}(x-y)\right\}h(x-y)\varphi(y)
\right|
+
\left|
g_{\delta}(x-y)\frac{\partial}{\partial x_{j}}
\left\{h(x-y)\right\}\varphi(y)
\right|
\end{eqnarray}
for all $x\in {\mathbb{R}}^{n}$ and for almost all $y \in \Omega$ and for all $\delta\in]0,1[$. By (\ref{q1poi3}), if $z \in \mathbb{Z}^n$ we have
\[
\left|\frac{\partial}{\partial\xi_{j}}g_{\delta}(\xi)\right|
=
\left|
\frac{\partial}{\partial\xi_{j}} \left(g\left(\frac{|\xi-qz|}{\delta}\right)\right)
\right|
=
\left|
g'\left(\frac{|\xi-qz|}{\delta}\right)\frac{\xi_{j}-q_{jj}z_{j}}{|\xi-qz|}\frac{1}{\delta}
\right|
\leq\sup|g'|\frac{1}{\delta}\,,
\]
for all $\xi\in qz+{\mathbb{B}}_{n}(0,\delta/4)$ and for all $\delta\in]0,1[$,
and
\[
\frac{\partial}{\partial\xi_{j}}g_{\delta}(\xi)=0\qquad
\forall \xi\in 
{\mathbb{R}}^{n}\setminus\bigcup_{z\in {\mathbb{Z}}^{n}}\left(qz+{\mathbb{B}}_{n}(0,\delta/4)\right)\,,
\]
and accordingly
\[
\left|\frac{\partial}{\partial\xi_{j}}g_{\delta}(\xi)\right|
\leq \frac{1}{\delta}\sup|g'|\qquad\forall \xi\in {\mathbb{R}}^{n}\,,
\]
for all $\delta\in]0,1[$.  Since the functions $h$ and $\frac{\partial h}{\partial x_{j}}$ are $q$-periodic and continuous  in ${\mathbb{R}}^{n}\setminus
q{\mathbb{Z}}^{n}$, the same functions are  bounded on 
${\mathbb{R}}^{n}\setminus\bigcup_{z\in {\mathbb{Z}}^{n}}
\left(qz+{\mathbb{B}}_{n}(0,\delta/16)\right)$. Since $g_{\delta}$ vanishes  on $\bigcup_{z\in {\mathbb{Z}}^{n}}\left(qz+{\mathbb{B}}_{n}(0,\delta/8)\right)$, we have 
\begin{eqnarray}
\nonumber
\lefteqn{
\left|
\frac{\partial}{\partial x_{j}}
\left\{
g_{\delta}(x-y)h(x-y)\varphi(y)
\right\}
\right|
}
\\ \nonumber
&&\qquad
\leq \frac{1}{\delta}\sup|g'|
\left( \sup_{
{\mathbb{R}}^{n}\setminus\bigcup_{z\in {\mathbb{Z}}^{n}}
\left(qz+{\mathbb{B}}_{n}(0,\delta/16)\right)
}|h|\right)  |\varphi(y)|
\\ \nonumber
&&\qquad\qquad 
+
\max_{j\in\{1,{...},n\}}
\left( \sup_{
{\mathbb{R}}^{n}\setminus\bigcup_{z\in {\mathbb{Z}}^{n}}
\left(qz+{\mathbb{B}}_{n}(0,\delta/16)\right)
}\left|
\frac{\partial h}{\partial x_{j}}
\right|\right)\|g\|_{
L^{\infty}({\mathbb{R}} )
}  |\varphi(y)| \,,
\end{eqnarray}
for all $y\in\Omega$ and $x\in {\mathbb{R}}^{n}$. Since $\varphi\in L^{1}(\Omega)$, the differentiability theorem for integrals depending on a parameter implies that
\[
\frac{\partial u_{\delta}^{+}}{\partial x_{j}}(x)= \int_{\Omega}
\frac{\partial  }{\partial x_{j}}\left[
g_{\delta}(x-y)h(x-y)
\right]\varphi (y)\,dy\qquad\forall x\in {\mathbb{R}}^{n}\,.
\]
Hence, $u_{\delta}^{+}\in C^{1}_{q}(
{\mathbb{R}}^{n}
)$. In order to prove that ${\mathcal{P}}^{+}[h,\varphi]
\in C^{1}_{q}({\mathbb{R}}^{n}
)$, it suffices to show that
\begin{eqnarray}
\label{q1poi6}
&\lim_{\delta\to 0}u_{\delta}^{+}={\mathcal{P}}^{+}[h,\varphi]
&{\mathrm{uniformly\ in}}\  {\mathbb{R}}^{n}\,,
\\
\label{q1poi7}
&\lim_{\delta\to 0}\frac{\partial u_{\delta}^{+}}{\partial x_{j}}={\mathcal{P}}^{+}[\frac{\partial h}{\partial x_{j}},\varphi]
&{\mathrm{uniformly\ in}}\  {\mathbb{R}}^{n}\,,
\end{eqnarray}
for all $j\in \{1,{...},n\}$. We first prove (\ref{q1poi6}). Since $1-g_{\delta}(x-y)=0$ when $x-y\in {\mathbb{R}}^{n}\setminus\bigcup_{z\in {\mathbb{Z}}^{n}}
\left(qz+{\mathbb{B}}_{n}(0,\delta/4)\right)$ and ${\mathbb{B}}_{n}(0,\delta/4)\subseteq\tilde{Q}$ for all $\delta\in]0,\min_{j=1,{...},n}q_{jj}[$
and ${\mathbb{Z}}_{Q}$ has $2^{n}$ elements, we have 
\begin{eqnarray*}
\lefteqn{
| {\mathcal{P}}^{+}[h,\varphi] (x)-u_{\delta}^{+}(x)|
}
\\ \nonumber
&&\qquad
=\left|\int_{
\left(
\bigcup_{z\in {\mathbb{Z}}^n}
\left(qz+{\mathbb{B}}_{n}(x,\delta/4)\right)
\right)\cap\Omega
} (1-g_{\delta}(|x-y|)) h(x-y)\varphi (y)\,dy\right|
\\ \nonumber
&&\qquad\leq
(1+\|g\|_{L^\infty({\mathbb{R}})})
\|\varphi\|_{L^{\infty}(\Omega)}
\int_{
\bigcup_{z\in {\mathbb{Z}}_{Q}}
\left(qz+{\mathbb{B}}_{n}(0,\delta/4)\right)
}|h( \eta)|\,d\eta
\\ \nonumber
&&\qquad\leq
(1+\|g\|_{L^\infty({\mathbb{R}})})
\|\varphi\|_{L^{\infty}(\Omega)}
2^{n}\|h\|_{A^{0}_{q,\lambda} }
\int_{\delta\tilde{Q} }|y|^{-\lambda}\,dy\qquad\forall x\in {\mathbb{R}}^{n}\,,
\end{eqnarray*}
for all $\delta\in]0,\min\{
1,
\min_{j=1,{...},n}q_{jj}
\}[$. Since $|y|^{-\lambda}$ is integrable in 
$\tilde{Q}$, $\lim_{\delta\to 0}\int_{ \delta\tilde{Q} }
|y|^{-\lambda}\,dy=0$ and the limiting relation (\ref{q1poi6}) follows. 

We now consider the limiting relation  (\ref{q1poi7}). Since $g'$ vanishes outside of the interval $[1/8,1/4]$, the same argument  we have exploited above shows that
\begin{eqnarray*}
\lefteqn{
\left|
{\mathcal{P}}^{+}[\frac{\partial h}{\partial x_{j}},\varphi](x)-
\frac{\partial u_{\delta}^{+} }{  \partial x_{j}  }(x)
\right|
}
\\ \nonumber
&&\qquad
\leq 
\left|
\int_{\Omega} (1-g_{\delta}( x-y ))
\frac{\partial h}{\partial x_{j} }(x-y)\varphi (y)\,dy 
\right|
\\ \nonumber
&&\qquad\qquad+
\left|
 \int_{ \Omega}
 \frac{\partial  }{\partial x_{j} }
 \left\{
 g_{\delta}(x-y)
 \right\}h(x-y)\varphi (y)\,dy 
 \right|
 \\ \nonumber
&&\qquad
\leq 
(1+\|g\|_{L^\infty({\mathbb{R}})})
\|\varphi\|_{L^{\infty}(\Omega)}
2^{n}\left\|\frac{\partial h}{\partial x_{j} }\right\|_{A^{0}_{q,\lambda+1} }
\int_{\delta\tilde{Q} }|y|^{-\lambda-1}\,dy
\\ \nonumber
&&\qquad\qquad+
\frac{1}{\delta}\sup|g'|
\|\varphi\|_{L^{\infty}(\Omega)}
2^{n}\left\| h \right\|_{A^{0}_{q,\lambda} }
\int_{\delta\tilde{Q} }|y|^{-\lambda}\,dy
\end{eqnarray*}
for all $\delta\in]0,\min\{
1,
\min_{j=1,{...},n}q_{jj}
\}[$ and $j\in\{1,{...},n\}$. Next we note that
\[
 \int_{\delta\tilde{Q} }|y|^{-\lambda-1}\,dy
=
 \delta^{n-\lambda-1}
\int_{ \tilde{Q} }|y|^{-\lambda-1}\,dy
\,,
\quad
\frac{1}{\delta}\int_{\delta\tilde{Q} }|y|^{-\lambda}\,dy
=
\frac{1}{\delta}\delta^{n-\lambda}
\int_{ \tilde{Q} }|y|^{-\lambda}\,dy 
\,.
\]
Since $n-\lambda-1>0$, we conclude that the limiting relation (\ref{q1poi7}) holds. Hence, we have ${\mathcal{P}}^{+}[h,\varphi] \in 
C^{1}_{q}(
{\mathbb{R}}^{n}
)$. 

The proof of statement (iv) follows the lines of that of statement (ii) by replacing the integration in $\Omega$ with that on $Q\setminus{\mathrm{cl}}\Omega$, and it is accordingly omitted. \end{proof} 

\vspace{\baselineskip}  
 
Next we prove the following lemma. 
 \begin{lemma}
\label{qpoder}
Let $\lambda\in ]0,n-1[$. Let $\Omega$ be a bounded open Lipschitz subset of ${\mathbb{R}}^{n}$ such that 
${\mathrm{cl}}\Omega\subseteq Q$. Then the following statements hold. 
\begin{enumerate}
\item[(i)] Let $\Omega\neq \emptyset$. If $(h,\varphi)\in  A^{1}_{q,\lambda} \times C^{1}({\mathrm{cl}} \Omega)$, then 
\begin{equation}
\label{qpoder1}
\frac{\partial  }{\partial x_{j}}{\mathcal{P}}^{+}[h,\varphi](x)
={\mathcal{P}}^{+}[h,\frac{\partial \varphi }{\partial x_{j}}](x)
-\int_{\partial\Omega}h(x-y)\varphi(y)(\nu_{\Omega})_{j}(y)\,d\sigma_{y}
 \,,
\end{equation}
for all $x\in {\mathbb{R}}^{n}$.
\item[(ii)]  Let $\Omega\neq \emptyset$. If $(h,\varphi)\in  A^{1}_{q,\lambda} \times C^{1}({\mathrm{cl}}Q\setminus  \Omega)$, then 
\begin{eqnarray}
\label{qpoder2}
\lefteqn{
\frac{\partial  }{\partial x_{j}}{\mathcal{P}}^{-}[h,\varphi](x)
={\mathcal{P}}^{-}[h,\frac{\partial \varphi }{\partial x_{j}}](x)
}
\\
\nonumber
&&
+\int_{\partial\Omega}h(x-y)\varphi(y)(\nu_{\Omega})_{j}(y)\,d\sigma_{y}
-
\int_{\partial Q}h(x-y)\varphi(y)(\nu_{Q})_{j}(y)\,d\sigma_{y}
 \,,
\end{eqnarray}
for all $x\in {\mathbb{R}}^{n}$.
\item[(iii)] Let $\Omega = \emptyset$. If $(h,\varphi)\in  A^{1}_{q,\lambda} \times C^{1}({\mathrm{cl}}Q )$, then 
\begin{equation}
\label{qpoder2a}
\frac{\partial  }{\partial x_{j}}{\mathcal{P}}^{-}[h,\varphi](x)
={\mathcal{P}}^{-}[h,\frac{\partial \varphi }{\partial x_{j}}](x)
-
\int_{\partial Q}h(x-y)\varphi(y)(\nu_{Q})_{j}(y)\,d\sigma_{y}
 \,,
\end{equation}
for all $x\in {\mathbb{R}}^{n}$.
\end{enumerate}
\end{lemma}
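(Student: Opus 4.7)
The plan is to combine Proposition \ref{q1poi} with a regularized divergence theorem. By Proposition \ref{q1poi}(ii), for statement (i) we have
\[
\frac{\partial}{\partial x_{j}}\mathcal{P}^{+}[h,\varphi](x) = \int_{\Omega} \frac{\partial h}{\partial x_{j}}(x-y)\varphi(y)\,dy,
\]
and we would like to use $(\partial h/\partial x_{j})(x-y) = -(\partial/\partial y_{j})[h(x-y)]$ together with the product rule and the divergence theorem to move the derivative onto $\varphi$ and produce a boundary term on $\partial\Omega$. Because $h(x-\cdot)$ is singular at each lattice point $y = x - qz$ with $z\in\mathbb{Z}^{n}$, the classical divergence theorem cannot be applied directly on $\Omega$.

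Accordingly, I would regularize by removing from $\Omega$ the closed balls $\mathrm{cl}\mathbb{B}_{n}(x-qz,\delta)$ for all $z\in\mathbb{Z}^{n}$, obtaining a bounded open Lipschitz set $\Omega_{\delta}^{x}$. Since $\mathrm{cl}\Omega\subseteq Q$ and $\Omega$ fits in a single periodicity cell, at most finitely many (in fact, for $\delta$ small, at most one) of these balls meet $\Omega$. On $\Omega_{\delta}^{x}$ the function $h(x-\cdot)$ is of class $C^{1}$, so the classical divergence theorem gives
\[
\int_{\Omega_{\delta}^{x}} \frac{\partial h}{\partial x_{j}}(x-y)\varphi(y)\,dy
= \int_{\Omega_{\delta}^{x}} h(x-y)\frac{\partial\varphi}{\partial y_{j}}(y)\,dy
- \int_{\partial\Omega_{\delta}^{x}} h(x-y)\varphi(y)(\nu_{\Omega_{\delta}^{x}})_{j}(y)\,d\sigma_{y}.
\]
Letting $\delta\to 0^{+}$, the two volume integrals converge to the corresponding integrals over $\Omega$ by the Dominated Convergence Theorem, with integrable majorants supplied by $h\in A^{0}_{q,\lambda}$ and $\partial_{j}h\in A^{0}_{q,\lambda+1}$ combined with Lemma \ref{acper} (which applies since $\lambda+1<n$). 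The portion of the boundary integral on $\partial\Omega\setminus\bigcup_{z}\mathrm{cl}\mathbb{B}_{n}(x-qz,\delta)$ converges to the claimed surface integral over $\partial\Omega$, again by dominated convergence on the hypersurface. The residual spherical pieces $\partial\mathbb{B}_{n}(x-qz,\delta)\cap\mathrm{cl}\Omega$ are controlled using Definition \ref{qa0r}: their contribution is bounded by a constant times $\|h\|_{A^{0}_{q,\lambda}}\|\varphi\|_{C^{0}(\mathrm{cl}\Omega)}\,\delta^{-\lambda}\cdot\delta^{n-1}=O(\delta^{n-1-\lambda})$, which vanishes since $\lambda<n-1$. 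This yields (\ref{qpoder1}).

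For statements (ii) and (iii), I would run the same regularization on $Q\setminus\mathrm{cl}\Omega$ in place of $\Omega$. The outward unit normal to $Q\setminus\mathrm{cl}\Omega$ equals $-\nu_{\Omega}$ on $\partial\Omega$ and $\nu_{Q}$ on $\partial Q$, producing the sign pattern in (\ref{qpoder2}), whereas in (iii) only the $\partial Q$ contribution survives. The main technical point in this setting is that $h(x-\cdot)$ may now be singular at additional points of $\partial Q$ (namely at $y = x-qz$ whenever such a point lies on a face, edge or corner of $Q$), so one must justify absolute integrability of $h(x-\cdot)$ over $\partial Q$; this follows from $|h(\xi)|\leq\|h\|_{A^{0}_{q,\lambda}}\mathrm{dist}(\xi,q\mathbb{Z}^{n})^{-\lambda}$ (Definition \ref{qa0r}) together with the fact that $\lambda<n-1$, which gives local integrability on the $(n-1)$-dimensional set $\partial Q$ even near the most singular corners. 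With these estimates in hand, the ball-removal scheme and the limit $\delta\to 0^{+}$ proceed exactly as in (i), and the anticipated main obstacle, namely controlling the spherical boundary pieces uniformly while the $\partial\Omega$ and $\partial Q$ integrals pass to the limit, is dispatched by the bound $O(\delta^{n-1-\lambda})$.
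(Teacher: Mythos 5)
Your strategy (rewrite $\partial_{x_j}h(x-y) = -\partial_{y_j}[h(x-y)]$, apply the product rule, then a ball-removal divergence theorem and let $\delta\to 0^+$, with the spherical contribution vanishing like $\delta^{n-1-\lambda}$) is the same core mechanism as the paper's, and your estimates for the spherical pieces and for the integrability of $h(x-\cdot)$ on $\partial Q$ when $\lambda<n-1$ are correct. However, you have missed the key simplifying step the paper uses, and as a result your argument carries technical burdens that you do not discharge.

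The paper first observes (via Propositions \ref{qopoi} and \ref{q1poi} and the Vitali Convergence Theorem applied to the boundary integrals) that \emph{both sides of each identity are continuous, $q$-periodic functions of $x$}. Consequently it suffices to verify (\ref{qpoder1}), (\ref{qpoder2}), (\ref{qpoder2a}) only for $x$ in the open dense set $\Omega\cup(Q\setminus\mathrm{cl}\Omega)$. For such $x$ the configuration is simple: the unique lattice translate of $x$ lying in $\mathrm{cl}Q$ is $x$ itself, and it lies either strictly in the interior of the domain of integration (so one removes a single ball strictly inside a Lipschitz set, obviously producing another Lipschitz set) or strictly outside its closure (so $h(x-\cdot)$ is $C^1$ on the closed domain and the divergence theorem applies with no regularization at all). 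In particular, no singularity ever falls on $\partial\Omega$ or on $\partial Q$, so no boundary integral is ever improper.

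By contrast, your proposal attempts to run the ball-removal argument directly for every $x\in\mathbb{R}^n$. When $x\in\partial\Omega$ (statement (i)) or $x\in q\mathbb{Z}^n$ or $x\in\partial Q$ (statements (ii)--(iii), recall the vertices of $Q$ are lattice points), the centres $x-qz$ can lie on $\partial\Omega$ or $\partial Q$, possibly at corners, and then: (a) the claim that $\Omega_\delta^x$ (resp.\ $(Q\setminus\mathrm{cl}\Omega)\setminus\bigcup_z\mathrm{cl}\mathbb{B}_n(x-qz,\delta)$) is a bounded open Lipschitz set is no longer automatic --- the sphere may meet the Lipschitz boundary tangentially or at a corner, and verifying Lipschitzness (for all, or at least a sequence of, $\delta\to 0^+$) requires a separate argument; (b) the convergence of the boundary integral over $\partial\Omega$ minus the removed caps to the full integral over $\partial\Omega$ also needs the integrability estimate $\lambda<n-1$, which you invoke explicitly only for $\partial Q$. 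These are exactly the points that the paper's continuity reduction renders moot. Your proof is therefore not wrong in spirit, but it is incomplete at the boundary cases; adding the paper's continuity observation both closes those gaps and shortens the argument considerably.
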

\begin{proof} We first consider statement (i). Again, we proceed by a classical argument. By the previous proposition, we have 
\begin{eqnarray}
\label{qpoder3}
\lefteqn{
\frac{\partial  }{\partial x_{j}}{\mathcal{P}}^{+}[h,\varphi](x)=
\int_{\Omega}\frac{\partial h }{\partial x_{j}}(x-y)\varphi (y)\,dy
}
\\ \nonumber
&&\qquad
=-\int_{\Omega}\frac{\partial  }{\partial y_{j}}(h(x-y))\varphi(y)\,dy
\\ \nonumber
&&\qquad
=-\int_{\Omega}\frac{\partial  }{\partial y_{j}}\left(h(x-y)\varphi(y)
\right)\,dy
+
\int_{\Omega}h(x-y)\frac{\partial \varphi }{\partial y_{j}}(y)\,dy\qquad\forall x\in {\mathbb{R}}^{n}\,.
\end{eqnarray}
Since the singularities of the kernel $h(x-y)$ are weak and $h$ is continuous in ${\mathbb{R}}^{n}\setminus q{\mathbb{Z}}^{n}$, and $\varphi (\cdot)(\nu_{\Omega})_{j}(\cdot)$ is integrable on $\partial\Omega$, the Vitali Convergence Theorem implies that the integral $\int_{\partial\Omega}h(x-y)\varphi (y)(\nu_{\Omega})_{j}(y)
\,d\sigma_{y}$ is continuous and $q$-periodic in $x\in {\mathbb{R}}^{n}$. Then Propositions \ref{qopoi} (ii) and
 \ref{q1poi} (ii) imply that both the left and the right hand sides of 
 (\ref{qpoder1}) are continuous and $q$-periodic in $x\in {\mathbb{R}}^{n}$. Hence, it suffices to verify (\ref{qpoder1})  on 
 $\Omega\cup (Q\setminus{\mathrm{cl}}\Omega)$. We first consider case $x\in \Omega$. Thus we now fix
$x\in \Omega$, and we take 
$\epsilon_{x}\in]0,{\mathrm{dist}}\,(x,\partial\Omega)[$. Then 
${\mathrm{cl}}{\mathbb{B}}_{n}(x,\epsilon)\subseteq\Omega$ and the set 
\[
\Omega_{\epsilon}\equiv\Omega\setminus
{\mathrm{cl}}{\mathbb{B}}_{n}(x,\epsilon)
\]
is an open Lipschitz set for all $\epsilon\in]0,\epsilon_{x}[$. 
By the Divergence Theorem, we have 
\begin{eqnarray}
\label{qpoder4}
\lefteqn{
\int_{\Omega}\frac{\partial  }{\partial y_{j}}\left(h(x-y)\varphi(y)
\right)\,dy
}
\\ \nonumber
&&\quad
=\int_{\Omega_{\epsilon}}\frac{\partial  }{\partial y_{j}}\left(h(x-y)\varphi(y)
\right)\,dy
+
\int_{{\mathbb{B}}_{n}(x,\epsilon)}\frac{\partial  }{\partial y_{j}}\left(h(x-y)\varphi(y)
\right)\,dy
\\ \nonumber
&&\quad
=\int_{\partial\Omega}h(x-y)\varphi(y)(\nu_{\Omega})_{j}(y)\,d\sigma_{y}
+\int_{\partial {\mathbb{B}}_{n}(x,\epsilon) }h(x-y)\varphi(y)
\frac{x_{j}-y_{j}}{|x-y|}\,d\sigma_{y}
\\ \nonumber
&&\qquad
-\int_{{\mathbb{B}}_{n}(x,\epsilon)}\frac{\partial h }{\partial x_{j}}  (x-y)\varphi(y)\,dy
+\int_{{\mathbb{B}}_{n}(x,\epsilon)} h(x-y)\frac{\partial \varphi  }{\partial y_{j}} (y)\,dy\,,
\end{eqnarray}
for all $\epsilon\in]0,\epsilon_{x}[$.
By the inclusion ${\mathrm{cl}}{\mathbb{B}}_{n}(x,\epsilon_{x})
\subseteq\Omega\subseteq Q$, we have $
{\mathrm{cl}}{\mathbb{B}}_{n}(0,\epsilon_{x})
\subseteq\tilde{Q}$ and
\begin{eqnarray}
\label{qpoder5}
\lefteqn{
\left|
\int_{\partial {\mathbb{B}}_{n}(x,\epsilon) }h(x-y)\varphi(y)
\frac{x_{j}-y_{j}}{|x-y|}\,d\sigma_{y}
\right|
}
\\ \nonumber
&&\qquad
\leq
\left(
\sup_{x\in {\mathrm{cl}}{\mathbb{B}}_{n}(0,
 \epsilon_{x} 
) \setminus\{0\}
}|h(x)|\,|x|^{\lambda}
\right)\|\varphi\|_{L^{\infty}(\Omega)}
\int_{\partial {\mathbb{B}}_{n}(0,\epsilon)
}|y|^{-\lambda}\,d\sigma_y
\\ \nonumber
&&\qquad
\leq
\left(
\sup_{x\in \tilde{Q} \setminus\{0\}
}|h(x)|\,|x|^{\lambda}
\right)\|\varphi\|_{L^{\infty}(\Omega)}
s_{n}\epsilon^{n-1-\lambda}\,,
\end{eqnarray}
for all $\epsilon\in]0,\epsilon_{x}[$. By  
Proposition \ref{q1poi} (i)  the functions $\frac{\partial h}{\partial x_{j}}(x-y)\varphi (y)$, $h(x-y)\frac{\partial \varphi}{\partial x_{j}}(y)$ are integrable 
 in the variable $y\in \Omega$. Then we have
\begin{eqnarray}
\label{qpoder6}
\lim_{\epsilon\to 0}
\int_{{\mathbb{B}}_{n}(x,\epsilon)}\frac{\partial h }{\partial x_{j}}  (x-y)\varphi(y)\,dy=0\,,
\\
\nonumber
\lim_{\epsilon\to 0}
\int_{{\mathbb{B}}_{n}(x,\epsilon)} h(x-y)\frac{\partial \varphi  }{\partial y_{j}} (y)\,dy=0\,.
\end{eqnarray}
By (\ref{qpoder5}) and (\ref{qpoder6}), we can take the limit as $\epsilon$ tends to $0$ in (\ref{qpoder4}) and deduce that 
\begin{equation}
\label{qpoder6a}
\int_{\Omega}\frac{\partial  }{\partial y_{j}}\left(h(x-y)\varphi(y)
\right)\,dy
=
\int_{\partial\Omega}h(x-y)\varphi(y)(\nu_{\Omega})_{j}(y)\,d\sigma_{y}\,.
\end{equation}
Then equality (\ref{qpoder3}) implies that formula (\ref{qpoder1}) holds.

We now turn to consider case $x\in Q\setminus{\mathrm{cl}}\Omega$. Since $h(x-y)\varphi (y)$ is continuously differentiable in $y\in {\mathrm{cl}}\Omega$, the Divergence Theorem implies that (\ref{qpoder6a}) holds. Hence, equality (\ref{qpoder3}) implies that equality (\ref{qpoder1}) holds.

We now prove statement (ii). By the previous proposition, we have 
\begin{eqnarray}
\label{qpoder7}
\lefteqn{
\frac{\partial  }{\partial x_{j}}{\mathcal{P}}^{-}[h,\varphi](x)=
\int_{Q\setminus{\mathrm{cl}}\Omega}\frac{\partial h }{\partial x_{j}}(x-y)\varphi (y)\,dy
}
\\ \nonumber
&&\qquad
=-\int_{Q\setminus{\mathrm{cl}}\Omega}\frac{\partial  }{\partial y_{j}}(h(x-y))\varphi(y)\,dy
\\ \nonumber
&&\qquad
=-\int_{Q\setminus{\mathrm{cl}}\Omega}\frac{\partial  }{\partial y_{j}}\left(h(x-y)\varphi(y)
\right)\,dy
+
\int_{Q\setminus{\mathrm{cl}}\Omega}h(x-y)\frac{\partial \varphi }{\partial y_{j}}(y)\,dy\,,
\end{eqnarray}
for all $x\in {\mathbb{R}}^{n}$. Since the singularities of the kernel $h(x-y)$ are weak and $h$ is continuous in ${\mathbb{R}}^{n}\setminus q{\mathbb{Z}}^{n}$, and $\varphi (\cdot)(\nu_{\Omega})_{j}(\cdot)$, $\varphi (\cdot)(\nu_{Q})_{j}(\cdot)$
are  integrable on $\partial\Omega$ and on $\partial Q$, respectively, 
the Vitali Convergence Theorem implies that the integrals on $\partial\Omega$ and on $\partial Q$ in the right hand side of (\ref{qpoder2}) are continuous and $q$-periodic in $x\in {\mathbb{R}}^{n}$. Then Propositions \ref{qopoi} (v) and \ref{q1poi}  (iv) imply that both the left  and the right hand sides of (\ref{qpoder2}) are continuous in $x\in {\mathbb{R}}^{n}$. Hence, it suffices to verify that 
equality  (\ref{qpoder2}) holds for $x\in \Omega\cup (Q\setminus{\mathrm{cl}}\Omega)$. We first consider case $x\in \Omega$. Since $h(x-y)\varphi (y)$ is continuously differentiable in $y\in {\mathrm{cl}}Q\setminus\Omega$, the Divergence Theorem implies that
\begin{eqnarray}
\label{qpoder7a}
\lefteqn{
\int_{ Q\setminus{\mathrm{cl}}\Omega }\frac{\partial  }{\partial y_{j}}\left(h(x-y)\varphi(y)
\right)\,dy}
\\ \nonumber
&&\qquad
=
\int_{\partial Q}h(x-y)\varphi (y)(\nu_{Q})_{j}(y)\,d\sigma_{y}
-\int_{\partial\Omega}h(x-y)\varphi(y)(\nu_{\Omega})_{j}(y)\,d\sigma_{y}\,.
\end{eqnarray}
Hence, equality (\ref{qpoder7}) implies that equality
(\ref{qpoder2}) holds.  Next we consider the case when $x\in Q\setminus{\mathrm{cl}}\Omega$. Let $\epsilon_{x}\in]0,{\mathrm{dist}}\,(x,\partial(Q\setminus{\mathrm{cl}}\Omega) )[$. Then 
${\mathrm{cl}}{\mathbb{B}}_{n}(x,\epsilon)\subseteq
Q\setminus{\mathrm{cl}}\Omega$ and the set 
\[
V_{\epsilon}\equiv
(Q\setminus{\mathrm{cl}}\Omega)
\setminus
{\mathrm{cl}}{\mathbb{B}}_{n}(x,\epsilon)
\]
is an open Lipschitz set  for all $\epsilon\in]0,\epsilon_{x}[$. 
By the Divergence Theorem, we have 
\begin{eqnarray}
\label{qpoder8}
\lefteqn{
\int_{ Q\setminus{\mathrm{cl}}\Omega }\frac{\partial  }{\partial y_{j}}\left(h(x-y)\varphi(y)
\right)\,dy
}
\\ \nonumber
&&\quad
=\int_{V_{\epsilon}}\frac{\partial  }{\partial y_{j}}\left(h(x-y)\varphi(y)
\right)\,dy
+
\int_{{\mathbb{B}}_{n}(x,\epsilon)}\frac{\partial  }{\partial y_{j}}\left(h(x-y)\varphi(y)
\right)\,dy
\\ \nonumber
&&\quad
=\int_{\partial Q}h(x-y)\varphi(y)(\nu_{Q})_{j}(y)\,d\sigma_{y}
-\int_{\partial\Omega}h(x-y)\varphi(y)(\nu_{\Omega})_{j}(y)\,d\sigma_{y}
\\ \nonumber
&&\quad\quad
+\int_{\partial {\mathbb{B}}_{n}(x,\epsilon) }h(x-y)\varphi(y)
\frac{x_{j}-y_{j}}{|x-y|}\,d\sigma_{y}
\\ \nonumber
&&\qquad
-\int_{{\mathbb{B}}_{n}(x,\epsilon)}\frac{\partial h }{\partial x_{j}}  (x-y)\varphi(y)\,dy
+\int_{{\mathbb{B}}_{n}(x,\epsilon)} h(x-y)\frac{\partial \varphi  }{\partial y_{j}} (y)\,dy
\end{eqnarray}
for all $\epsilon\in]0,\epsilon_{x}[$. Then by arguing precisely as in the proof of statement (i), we can prove that
\begin{eqnarray*}
\lim_{\epsilon\to 0}
\int_{\partial{\mathbb{B}}_{n}(x,\epsilon)}
h(x-y)\varphi(y)\frac{x_{j}-y_{j}}{|x-y|}\,dy=0\,,
\\
\lim_{\epsilon\to 0}
\int_{{\mathbb{B}}_{n}(x,\epsilon)}\frac{\partial h }{\partial x_{j}}  (x-y)\varphi(y)\,dy=0\,,
\\
\nonumber
\lim_{\epsilon\to 0}
\int_{{\mathbb{B}}_{n}(x,\epsilon)} h(x-y)\frac{\partial \varphi  }{\partial y_{j}} (y)\,dy=0\,.
\end{eqnarray*}
Then by taking the limit in (\ref{qpoder8}) as $\epsilon$ tends to  $0$, we deduce that equality (\ref{qpoder7a}) holds. Then equality  (\ref{qpoder7}) implies that formula 
(\ref{qpoder2}) holds. The proof of statement (iii) can be effected by a simplification of the proof of statement (ii) and it  is accordingly omitted.  \end{proof} 

\vspace{\baselineskip}

\noindent
{\bf Remark.}   Under the same assumptions of Lemma \ref{qpoder} (ii), (iii),  if $\varphi$  is the restriction of an element of 
$C^{1}_{q}({\mathrm{cl}}{\mathbb{S}}[\Omega]^{-})$, then we have 
\begin{equation}\label{qpoderz}
\int_{\partial Q}h(x-y)\varphi (y)(\nu_{Q})_{j}(y)\,d\sigma_{y}=0\,,
\end{equation}
for all $j\in\{1,{...},n\}$, and the second integral in the right hand side of (\ref{qpoder2}) and the integral in the right hand side of  (\ref{qpoder2a})  vanish. 

\vspace{\baselineskip}

Next we introduce a class of weakly singular $q$-periodic kernels, which we consider in our main results. 

\begin{definition}
\label{qh1r}
 Let  $G$ be an open subset of ${\mathbb{R}}^{n}$
 such that ${\mathrm{cl}}G$ is bounded and contained in ${\mathbb{R}}^{n}\setminus q{\mathbb{Z}}^{n}$. 
  Let $\rho\in]0,+\infty[$. Then we set
\[
H_{q}^{\lambda,\rho}(G)
\equiv
\left\{
h\in A^{1}_{q,\lambda} :\,
h_{|{\mathrm{cl}} G
}\in 
C^{0}_{\omega,\rho}({\mathrm{cl}} G)
\right\}\,,
\]
and we set
\[
\|h\|_{H_{q}^{\lambda,\rho}(G)}
\equiv
\|h\|_{A^{1}_{q,\lambda} }+
\|h\|_{
C^{0}_{\omega,\rho}({\mathrm{cl}} G)
}
\qquad\forall h\in
H_{q}^{\lambda,\rho}(G)\,.
\]
  Here we understand that if $G=\emptyset$, then $H_{q}^{\lambda,\rho}(G)=A^{1}_{q,\lambda}$ and $\|\cdot\|_{H_{q}^{\lambda,\rho}(G)}=
\|\cdot\|_{A^{1}_{q,\lambda} }$.
\end{definition}
Since both $A^{1}_{q,\lambda}$ and $C^{0}_{\omega,\rho}({\mathrm{cl}} G)$ are Banach spaces, also  $(H_{q}^{\lambda,\rho}(G),\|\cdot\|_{
H_{q}^{\lambda,\rho}(G)
})$ is a Banach space.

We are now ready to prove here below  in Theorems \ref{qropo} and \ref{qropoo} our main results on the continuity of the bilinear maps which take the pair consisting of a kernel and of a  density function to the volume potentials. 
In  Theorem \ref{qropo} we consider the restriction of the volume potential to a domain $\mathbb{S}[\Omega_1]$, with $\mathrm{cl}\Omega_1\subseteq \Omega\subseteq \mathrm{cl}\Omega\subseteq Q$, while in  Theorems \ref{qropoo} we study the restriction to a domain $\mathbb{S}[\Omega_2]^-$, with  ${\mathrm{cl}}\Omega
\subseteq \Omega_{2}\subseteq {\mathrm{cl}}\Omega_{2}\subseteq Q$.

\begin{theorem}
\label{qropo}
Let $\rho\in]0,+\infty[$, $\lambda\in ]0,n-1[$. 
Let $\Omega$ be a bounded open Lipschitz subset of ${\mathbb{R}}^{n}$
such that ${\mathrm{cl}}\Omega\subseteq Q$. Let $\Omega_{1}$ be a nonempty open subset of ${\mathbb{R}}^{n}$ such that ${\mathrm{cl}}\Omega_{1}\subseteq\Omega$. Let
$G$ be an open relatively compact neighborhood of the set
\[
\left\{
t-s:\, t\in {\mathrm{cl}}\Omega_{1}, s\in\partial\Omega
\right\}\,,
\]
such that $({\mathrm{cl}}G)\cap q{\mathbb{Z}}^{n}=\emptyset$. 
Then the following statements hold.
\begin{enumerate}
\item[(i)] The restriction of ${\mathcal{P}}^{+}[h,\varphi]$ to 
${\mathrm{cl}}{\mathbb{S}}[\Omega_{1}]$ belongs to  $C^{0}_{q,\omega,\rho}
({\mathrm{cl}}{\mathbb{S}}[\Omega_{1}])$ for all  pairs $(h,\varphi)\in H_{q}^{\lambda,\rho}(G)\times C^{0}_{\omega,\rho}({\mathrm{cl}}\Omega)$ and   
 the map from $H_{q}^{\lambda,\rho}(G)\times C^{0}_{\omega,\rho}({\mathrm{cl}}\Omega)$ to $C^{0}_{q,\omega,\rho}
({\mathrm{cl}}{\mathbb{S}}[\Omega_{1}])$ which takes $(h,\varphi)$ to $
{\mathcal{P}}^{+}[h,\varphi]_{|  {\mathrm{cl}}{\mathbb{S}}[\Omega_{1}]  }$
is bilinear and continuous. 
\item[(ii)] The restriction of ${\mathcal{P}}^{-}[h,\varphi_{|Q\setminus{\mathrm{cl}}\Omega}]$ to 
${\mathrm{cl}}{\mathbb{S}}[\Omega_{1}]$ belongs to  $C^{0}_{q,\omega,\rho}
({\mathrm{cl}}{\mathbb{S}}[\Omega_{1}])$ for all pairs  $(h,\varphi)\in H_{q}^{\lambda,\rho}(G)\times C^{0}_{q,\omega,\rho}
({\mathrm{cl}}{\mathbb{S}}[\Omega ]^{-})$ and 
 the map from $H_{q}^{\lambda,\rho}(G)\times C^{0}_{q,\omega,\rho}
({\mathrm{cl}}{\mathbb{S}}[\Omega ]^{-})$ to $C^{0}_{q,\omega,\rho}
({\mathrm{cl}}{\mathbb{S}}[\Omega_{1}])$ which takes $(h,\varphi)$ to $
{\mathcal{P}}^{-}[h,\varphi_{|Q\setminus{\mathrm{cl}}\Omega}]_{|  {\mathrm{cl}}{\mathbb{S}}[\Omega_{1}]  }$
is bilinear and continuous. 
\end{enumerate}
\end{theorem}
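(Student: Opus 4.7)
The plan is to establish, for every $\beta\in\mathbb{N}^{n}$, an $L^{\infty}$-estimate on $D^{\beta}\mathcal{P}^{+}[h,\varphi]$ (and $D^{\beta}\mathcal{P}^{-}[h,\varphi_{|Q\setminus{\mathrm{cl}}\Omega}]$) restricted to ${\mathrm{cl}}\mathbb{S}[\Omega_{1}]$ of Roumieu type $C\,|\beta|!/\rho^{|\beta|}$, with $C$ linear in $\|h\|_{H_{q}^{\lambda,\rho}(G)}$ and in the Roumieu norm of $\varphi$. Combined with the $q$-periodicity and $C^{\infty}$ regularity supplied (inductively) by Propositions \ref{qopoi} and \ref{q1poi}, such estimates are, by the very definition of $C^{0}_{q,\omega,\rho}({\mathrm{cl}}\mathbb{S}[\Omega_{1}])$, equivalent to the continuity of the bilinear map in question; bilinearity is visible from the integral definition.

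The heart of the proof is an explicit formula for $D^{\beta}\mathcal{P}^{+}[h,\varphi]$, derived by iterating the identity of Lemma \ref{qpoder}(i). Differentiating the volume potential once produces, via Lemma \ref{qpoder}(i), a new volume potential with the derivative moved onto $\varphi$, plus a single surface integral on $\partial\Omega$ carrying the factor $h(x-y)$. Further differentiations applied to such a surface integral pass under the integral sign and accumulate on the kernel, which is legitimate because, for $x\in{\mathrm{cl}}\mathbb{S}[\Omega_{1}]$ reduced modulo $q\mathbb{Z}^{n}$ and $y\in\partial\Omega$, the vector $x-y$ lies in ${\mathrm{cl}}G$, where $h$ is real-analytic. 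A direct induction on $|\beta|$ then gives
\[
D^{\beta}\mathcal{P}^{+}[h,\varphi](x)=\mathcal{P}^{+}[h,D^{\beta}\varphi](x)-\sum_{k=1}^{|\beta|}\int_{\partial\Omega}(D^{\gamma_{k}}h)(x-y)(D^{\delta_{k}}\varphi)(y)(\nu_{\Omega})_{j_{k}}(y)\,d\sigma_{y}
\]
for $x\in{\mathrm{cl}}\mathbb{S}[\Omega_{1}]$, where $|\gamma_{k}|+|\delta_{k}|=|\beta|-1$ for every $k$, and the total number of boundary terms is exactly $|\beta|$ (one is produced at each differentiation step; the coefficients are all $-1$ with no Leibniz-type blowup).

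I would then estimate each piece. For the main term, Proposition \ref{qopoi}(iii) combined with the Roumieu bound $\|D^{\beta}\varphi\|_{L^{\infty}({\mathrm{cl}}\Omega)}\leq\|\varphi\|_{C^{0}_{\omega,\rho}({\mathrm{cl}}\Omega)}\,|\beta|!/\rho^{|\beta|}$ gives
\[
\|\mathcal{P}^{+}[h,D^{\beta}\varphi]\|_{L^{\infty}(\mathbb{R}^{n})}\leq C_{1}\,\|h\|_{A^{0}_{q,\lambda}}\|\varphi\|_{C^{0}_{\omega,\rho}({\mathrm{cl}}\Omega)}\,\frac{|\beta|!}{\rho^{|\beta|}},
\]
where $C_{1}=2^{n}\int_{\tilde{Q}}|y|^{-\lambda}\,dy$. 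For each surface summand, the analyticity estimate $\|D^{\gamma_{k}}h\|_{L^{\infty}({\mathrm{cl}}G)}\leq\|h\|_{C^{0}_{\omega,\rho}({\mathrm{cl}}G)}|\gamma_{k}|!/\rho^{|\gamma_{k}|}$ (legitimate since $x-y\in{\mathrm{cl}}G$) together with the analogous bound for $\|D^{\delta_{k}}\varphi\|_{L^{\infty}({\mathrm{cl}}\Omega)}$ and the elementary inequality $|\gamma_{k}|!\,|\delta_{k}|!\leq(|\gamma_{k}|+|\delta_{k}|)!=(|\beta|-1)!$ yields a bound of order $(|\beta|-1)!/\rho^{|\beta|-1}$. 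Summing the $|\beta|$ boundary terms and using $|\beta|\cdot(|\beta|-1)!=|\beta|!$, the total surface contribution is at most
\[
C_{2}\,\|h\|_{H_{q}^{\lambda,\rho}(G)}\|\varphi\|_{C^{0}_{\omega,\rho}({\mathrm{cl}}\Omega)}\,\frac{|\beta|!}{\rho^{|\beta|}},
\]
with $C_{2}$ depending on $\rho$ and on the surface measure of $\partial\Omega$. Combining, $\rho^{|\beta|}\|D^{\beta}\mathcal{P}^{+}[h,\varphi]\|_{L^{\infty}}/|\beta|!$ is bounded uniformly in $\beta$ by a constant multiple of $\|h\|_{H_{q}^{\lambda,\rho}(G)}\|\varphi\|_{C^{0}_{\omega,\rho}({\mathrm{cl}}\Omega)}$, proving statement (i).

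Statement (ii) is obtained by the same scheme, replacing $\mathcal{P}^{+}$ by $\mathcal{P}^{-}$, the integration domain $\Omega$ by $Q\setminus{\mathrm{cl}}\Omega$, and Lemma \ref{qpoder}(i) by Lemma \ref{qpoder}(ii). The latter formula contains an additional surface integral over $\partial Q$; the $q$-periodicity of $\varphi\in C^{0}_{q,\omega,\rho}({\mathrm{cl}}\mathbb{S}[\Omega]^{-})$, however, makes this contribution identically zero (and, stable under differentiation under the integral sign, so does every higher-order analogue) by the Remark following Lemma \ref{qpoder}; see (\ref{qpoderz}). The inductive formula and the estimates of part (i) then go through verbatim with $\varphi$ replaced by $\varphi_{|Q\setminus{\mathrm{cl}}\Omega}$ and $\|\varphi\|_{C^{0}_{\omega,\rho}({\mathrm{cl}}\Omega)}$ by $\|\varphi\|_{C^{0}_{q,\omega,\rho}({\mathrm{cl}}\mathbb{S}[\Omega]^{-})}$. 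The main obstacle of the whole argument is the careful inductive derivation of the differentiation formula and the verification that, since each boundary term carries coefficient $\pm1$ and there are only $|\beta|$ of them, no exponential combinatorial factor appears that would force a reduction of the Roumieu parameter from $\rho$ to something smaller; once this is secured, the rest is a routine combination of Propositions \ref{qopoi}, \ref{q1poi}, Definition \ref{qh1r}, and the elementary factorial inequality above.
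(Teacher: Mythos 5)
Your proposal is correct and follows essentially the same route as the paper: the authors likewise prove (by finite induction, quoting \cite[p.~856]{La05} for the bookkeeping) the differentiation formula (\ref{qropo2}) which has exactly $|\beta|$ boundary terms with unit coefficients, each carrying $|\beta|-1$ total derivatives split between $h$ and $\varphi$; then they estimate the boundary contribution via the Roumieu bounds and $m_{1}!\,m_{2}!\leq(m_{1}+m_{2})!$ as in (\ref{qropo3})--(\ref{qropo6}), estimate $\mathcal{P}^{+}[h,\partial^{\beta}\varphi]$ via Proposition \ref{qopoi}(iii), and treat statement (ii) by the sign-flipped analogue together with the vanishing of the $\partial Q$ integral from the Remark (\ref{qpoderz}). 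The only cosmetic difference is that the paper writes the formula with the explicit ordered index structure $\sum_{k=1}^{n}\sum_{l_{k}=0}^{\beta_{k}-1}$ rather than an abstract enumeration of $|\beta|$ summands, which is what makes the absence of Leibniz-type combinatorial growth transparent in the induction.
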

\begin{proof} We first consider statement (i), and we prove that if $m\in {\mathbb{N}}\setminus\{0\}$ and if $(h,\varphi)\in
H^{\lambda,\rho}_{q}(G)\times C^{m} ({\mathrm{cl}}\Omega)$, then 
$
{\mathcal{P}}^{+}[h,\varphi]_{|  {\mathrm{cl}}
{\mathbb{S}}[\Omega_{1}]
 }
\in C^{m}_{q}
({\mathrm{cl}}
{\mathbb{S}}[\Omega_{1}])$ and
\begin{eqnarray}
	\label{qropo2}
	\lefteqn{
\partial^{\beta}
{\mathcal{P}}^{+}[h,\varphi](x)=
{\mathcal{P}}^{+}[h,\partial^{\beta}\varphi](x)
}
\\
&&
\nonumber\qquad\qquad\qquad\qquad
-\sum_{k=1}^{n}\sum_{l_{k}=0}^{\beta_{k}-1}
\partial^{\beta_{n}}_{x_{n}}{...}\partial^{\beta_{k+1}}_{x_{k+1}}\partial^{l_{k}}_{x_{k}}
\biggl\{\biggr.
\int_{\partial\Omega}h(x-y)
\\
&&
\nonumber\qquad\qquad\qquad\qquad\qquad
\times
\left(\partial^{\beta_{k}-1-l_{k}}_{y_{k}}
\partial^{\beta_{k-1}}_{y_{k-1}}{...}\partial^{\beta_{1}}_{y_{1}}\varphi(y)\right)
(\nu_{\Omega})_{k}(y)\,d\sigma_{y}
\biggl.\biggr\}\,,
\end{eqnarray}
for all $x\in{\mathrm{cl}}{\mathbb{S}}[\Omega_{1}]$ and for all $\beta\in {\mathbb{N}}^{n}$ such that $|\beta|\leq m$,  where we understand that 
$\sum_{l_{k}=0}^{\beta_{k}-1}$ is omitted 
if $\beta_{k}=0$. 
Since $h$ is $q$-periodic, it suffices to prove (\ref{qropo2}) for  $x\in{\mathrm{cl}} \Omega_{1} $. If $m=1$, then the statement follows by Lemma \ref{qpoder} (i). 

Next we  assume that the statement holds for $m$  and we prove it for $m+1$. 
Let $(h,\varphi)\in
H^{\lambda,\rho}_{q}(G)\times C^{m+1} ({\mathrm{cl}}\Omega)$. By the inductive assumption, ${{\mathcal{P}}^{+}[h,\frac{\partial \varphi }{\partial x_{j}}]_{|{\mathrm{cl}}{\mathbb{S}}[\Omega_{1}]}}\in C^{m }_{q} ({\mathrm{cl}}{\mathbb{S}}[\Omega_{1}])$ for all $j\in\{1,{...},n\}$.

 Since 
$h\in C^{m}({\mathrm{cl}}G
 )$ and $\varphi\,,(\nu_{\Omega})_{j}\in C^{0}(\partial\Omega)$, the classical differentiability theorem  for integrals depending on a parameter implies that the second term in the right hand side of formula (\ref{qpoder1}) defines a function of class $C^{m}_{q}(
 {\mathrm{cl}}{\mathbb{S}}[\Omega_{1}])$. Then formula (\ref{qpoder1}) implies that $\frac{\partial}{\partial x_{j}}{\mathcal{P}}^{+}[h,\varphi]$ belongs to $C^{m}_{q}({\mathrm{cl}}{\mathbb{S}}[\Omega_{1}])$.
Hence, ${\mathcal{P}}^{+}[h,\varphi]_{|
{\mathrm{cl}\mathbb{S}}[\Omega_{1}]
}\in  C^{m+1}_{q}(
 {\mathrm{cl}}{\mathbb{S}}[\Omega_{1}])$. 

Next one proves the formula for the derivatives  by following the lines  of the corresponding argument  of \cite[p.~856]{La05}: first one proves the formula for $\partial^{\beta}=\partial^{\beta_{j}}_{x_{j}}$ by finite induction on $\beta_{j}$, then one proves the formula for 
 $\partial^{\beta}=\partial^{\beta_{1}}_{x_{1}}{...}
 \partial^{\beta_{j}}_{x_{j}}$ by finite induction on $j\in\{1,{...},n\}$, and  finally one deduces that the formula holds for $|\beta|\leq m+1$.

If $(h,\varphi)\in
H^{\lambda,\rho}(G)\times C^{\infty} ({\mathrm{cl}}\Omega)$, then by applying the above  statement for all $m\in {\mathbb{N}}\setminus\{0\}$ we deduce that
$
{\mathcal{P}}^{+}[h,\varphi]_{|{\mathrm{cl}}{\mathbb{S}}[\Omega_{1}]}
$ belongs to $ C^{\infty}_{q} 
({\mathrm{cl}}{\mathbb{S}}[\Omega_{1}])$ and that formula (\ref{qropo2}) holds for all order derivatives.

We now assume that $(h,\varphi)\in
H^{\lambda,\rho}_{q}(G)\times C^{0}_{\omega,\rho}({\mathrm{cl}}\Omega)$ and we turn to estimate the sup-norm in ${\mathrm{cl}}\Omega_{1}$ of the sum in the right hand side of (\ref{qropo2}), which we denote by  $I$.

We abbreviate by $I(k,l_{k})$ the $(k,l_{k})$-th term in the sum $I$, and we now estimate the supremum of $I(k,l_{k})$ in ${\mathrm{cl}}\Omega_{1}$. We can clearly assume that $\beta_{k}>0$. Then we have 
\begin{eqnarray}
	\label{qropo3}
	\lefteqn{
\sup_{ {\mathrm{cl}}\Omega_{1}}|I(k,l_{k})|  
}
\\
&&
\nonumber =
\sup_{x\in {\mathrm{cl}}\Omega_{1}}
\biggl|
\partial^{\beta_{n}}_{x_{n}}{...}\partial^{\beta_{k+1}}_{x_{k+1}}\partial^{l_{k}}_{x_{k}}
\left\{
\int_{\partial\Omega}h(x-y)
\partial^{\beta_{k}-1-l_{k}}_{y_{k}}\partial^{\beta_{k-1}}_{y_{k-1}}
{...}\partial^{\beta_{1}}_{y_{1}}\varphi (y)
(\nu_{\Omega})_k(y)\,d\sigma_{y}
\right\}
\biggr|
\\
&&
\nonumber
\qquad 
\leq\int_{\partial\Omega}
\sup_{\xi\in G}
\left|
\partial^{\beta_{n}}_{\xi_{n}}{...} \partial^{\beta_{k+1}}_{\xi_{k+1}}
 \partial^{l_{k}}_{\xi_{k}}h(\xi)
\right|
\,
\left|
\partial^{\beta_{k}-1-l_{k}}_{y_{k}}\partial^{\beta_{k-1}}_{y_{k-1}}{...}\partial^{\beta_{1}}_{y_{1}}\varphi (y)
\right|\,d\sigma_{y}\,.
\end{eqnarray}
Since $h\in H^{\lambda,\rho}_{q}(G)$, we have 
\[
\sup_{\xi\in G}
\left|
\partial^{\beta_{n}}_{\xi_{n}}{...} \partial^{\beta_{k+1}}_{\xi_{k+1}}
 \partial^{l_{k}}_{\xi_{k}}h(\xi)
\right|
\leq
\|h\|_{  H^{\lambda,\rho}_{q}(G)  }
\frac{(\beta_{n}+{...}+\beta_{k+1}+l_{k})!}{
\rho^{\beta_{n}+{...}+\beta_{k+1}+l_{k}}
}\,.
\]
 Moreover, 
\[
\left|
\partial^{\beta_{k}-1-l_{k}}_{y_{k}}\partial^{\beta_{k-1}}_{y_{k-1}}{...}\partial^{\beta_{1}}_{y_{1}}\varphi (y)
\right|
\leq
\|\varphi\|_{ C^{0}_{\omega,\rho}({\mathrm{cl}}\Omega)}
\frac{(\beta_{1}+{...}+\beta_{k-1}+\beta_{k}-1-l_{k})!}{
\rho^{
\beta_{1}+{...}+\beta_{k-1}+\beta_{k}-1-l_{k}
}}\,,
\]
for all $y \in \mathrm{cl}\Omega$. Then we have 
\begin{eqnarray}
\nonumber
\lefteqn{
\sup_{ {\mathrm{cl}}\Omega_{1}}|I(k,l_{k})| 
\leq 
m_{n-1}(\partial\Omega)
\|h\|_{  H^{\lambda,\rho}_{q}(G)  }
\|\varphi\|_{ C^{0}_{\omega,\rho}({\mathrm{cl}}\Omega)}
\frac{1}{  \rho^{|\beta|-1}
}}
\\ \nonumber
&&\qquad\qquad
\times
(\beta_{n}+{...}+\beta_{k+1}+l_{k})! 
(\beta_{1}+{...}+\beta_{k-1}+\beta_{k}-1-l_{k})!\,,
\end{eqnarray}
where $m_{n-1}(\partial\Omega)$ denotes the $(n-1)$ dimensional Lebesgue measure of $\partial \Omega$. Next we note that
\[
m_{1}! m_{2}!\leq (m_{1}+m_{2})!\,,
\]
for all $m_1, m_2 \in \mathbb{N}$. Then we have
\[
\sup_{ {\mathrm{cl}}\Omega_{1}}|I(k,l_{k})| 
\leq 
m_{n-1}(\partial\Omega)
\|h\|_{  H^{\lambda,\rho}_{q}(G)  }
\|\varphi\|_{ C^{0}_{\omega,\rho}({\mathrm{cl}}\Omega)}
\frac{(|\beta|-1)!}{
 \rho^{|\beta|-1}
 }\,.
\]
Hence,
\begin{equation}
\label{qropo6}
\sup_{ {\mathrm{cl}}\Omega_{1}}|I|
\leq n  \rho m_{n-1}(\partial\Omega)
\|h\|_{  H^{\lambda,\rho}_{q}(G)  }
\|\varphi\|_{ C^{0}_{\omega,\rho}({\mathrm{cl}}\Omega)}
\frac{|\beta|!}{
\rho^{|\beta|}
 }\,.
\end{equation}
 On the other hand, Proposition \ref{qopoi} (iii) implies that
\begin{equation}
\label{qropo7}
\|{\mathcal{P}}^{+}[h,\partial^{\beta}\varphi]\|_{L^\infty(
\Omega)}
\leq 2^{n} \int_{\tilde{Q} }|y|^{-\lambda}\,dy
\|h\|_{A^{0}_{q,\lambda} } 
\| \partial^{\beta} \varphi\|_{L^{\infty}(\Omega)}\, .
\end{equation}
Then equality (\ref{qropo2}) and inequalities (\ref{qropo6}) and (\ref{qropo7}) imply that there exists $C>0$ such that
\[
\|\partial^{\beta}{\mathcal{P}}^{+}[h,\varphi]_{|\Omega_1}\|_{L^\infty(\Omega_1)}
\leq C \|h\|_{  H^{\lambda,\rho}(G)  }
\|   \varphi\|_{ C^{0}_{\omega,\rho}({\mathrm{cl}}\Omega)}
\frac{|\beta|!}{\rho^{|\beta| }}\qquad\forall
\beta\in {\mathbb{N}}^{n}\,,
\]
for all $(h,\varphi)\in  H^{\lambda,\rho}(G)  
\times C^{0}_{\omega,\rho}({\mathrm{cl}}\Omega)$. Hence, statement (i) holds true. 

The proof of statement (ii) follows the lines of the proof of statement (i). We only point out that formula (\ref{qropo2}) holds if we replace
${\mathcal{P}}^{+}$ by ${\mathcal{P}}^{-}$, and   the minus sign in the right hand side by a plus sign (see also (\ref{qpoder7})).   \end{proof} 

\vspace{\baselineskip}

Then, if ${\mathrm{cl}}\Omega
\subseteq
\Omega_{2}\subseteq
{\mathrm{cl}}\Omega_{2}\subseteq Q$, we have the following result in ${\mathbb{S}}[\Omega_2]^{-}$.
\begin{theorem}
\label{qropoo}
Let $\rho\in]0,+\infty[$, $\lambda\in ]0,n-1[$. Let $\Omega$ be a bounded open Lipschitz subset of ${\mathbb{R}}^{n}$ such that ${\mathrm{cl}}\Omega\subseteq Q$. 
Let $\Omega_{2}$ be an open subset of ${\mathbb{R}}^{n}$ such that 
\[
{\mathrm{cl}}\Omega
\subseteq
\Omega_{2}\subseteq
{\mathrm{cl}}\Omega_{2}\subseteq Q\,.
\]
Let
$G$ be an open relatively compact subset of ${\mathbb{R}}^{n}$ such that $({\mathrm{cl}}G)\cap q{\mathbb{Z}}^{n}=\emptyset$.

 If $\Omega\neq\emptyset$, then we assume that $G$ is an open neighborhood of the set
\[
\left\{
t-s:\, t\in {\mathrm{cl}}Q\setminus\Omega_{2}, s\in\partial\Omega
\right\}\,.
\]
 If instead $\Omega=\emptyset$, then we assume that $G=\emptyset$. 
Then the following statements hold.
\begin{enumerate}
\item[(i)] Let $\Omega\neq\emptyset$. The restriction of ${\mathcal{P}}^{+}[h,\varphi]$ to 
${\mathrm{cl}}{\mathbb{S}}[\Omega_{2}]^{-}$ belongs to $C^{0}_{q,\omega,\rho}
({\mathrm{cl}}{\mathbb{S}}[\Omega_{2}]^{-})$ for all pairs $(h,\varphi)\in H_{q}^{\lambda,\rho}(G)\times C^{0}_{\omega,\rho}
({\mathrm{cl}}\Omega)$ and the map from $H_{q}^{\lambda,\rho}(G)\times C^{0}_{\omega,\rho}({\mathrm{cl}}\Omega)$ to $C^{0}_{q,\omega,\rho}
({\mathrm{cl}}{\mathbb{S}}[\Omega_{2}]^{-})$ which takes $(h,\varphi)$ to $
{\mathcal{P}}^{+}[h,\varphi]_{|  {\mathrm{cl}}{\mathbb{S}}[\Omega_{2}]^{-}  }$
is bilinear and continuous.
\item[(ii)] The restriction of ${\mathcal{P}}^{-}[h,\varphi_{|Q\setminus{\mathrm{cl}}\Omega}
]$ to  ${\mathrm{cl}}{\mathbb{S}}[\Omega_{2}]^{-}$ belongs to $C^{0}_{q,\omega,\rho}
({\mathrm{cl}}{\mathbb{S}}[\Omega_{2}]^{-})$ for all pairs $(h,\varphi)\in H_{q}^{\lambda,\rho}(G)\times C^{0}_{q,\omega,\rho}
({\mathrm{cl}}{\mathbb{S}}[\Omega ]^{-})$ and the map from $H_{q}^{\lambda,\rho}(G)\times C^{0}_{q,\omega,\rho}({\mathrm{cl}}{\mathbb{S}}[\Omega ]^{-})$ to $C^{0}_{q,\omega,\rho}
({\mathrm{cl}}{\mathbb{S}}[\Omega_{2}]^{-})$ which takes $(h,\varphi)$ to $
{\mathcal{P}}^{-}[h,\varphi_{|Q\setminus{\mathrm{cl}}\Omega}]_{|  {\mathrm{cl}}{\mathbb{S}}[\Omega_{2}]^{-}  }$
is bilinear and continuous.  (We note that if $G=\emptyset$, then by Definition \ref{qh1r} we have $H_{q}^{\lambda,\rho}(G)=A^{1}_{q,\lambda}$.)
\end{enumerate}
\end{theorem}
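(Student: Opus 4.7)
The plan is to mirror the proof of Theorem \ref{qropo}, with the roles of $\Omega_1$ and $\Omega_2$ interchanged and the geometry adjusted accordingly. By the $q$-periodicity of the potentials (Proposition \ref{qopoi} (ii) and (v)), it suffices to bound $\partial^\beta \mathcal{P}^\pm[h,\varphi]$ on ${\mathrm{cl}}Q\setminus\Omega_2$, which is precisely the slice appearing in the definition of $\|\cdot\|_{C^{0}_{q,\omega,\rho}({\mathrm{cl}}{\mathbb{S}}[\Omega_2]^{-})}$. The key geometric observation is that for $x\in{\mathrm{cl}}Q\setminus\Omega_2$ and $y\in\partial\Omega$, the argument $x-y$ lies in the set $\{t-s:\,t\in{\mathrm{cl}}Q\setminus\Omega_2,\,s\in\partial\Omega\}\subseteq G$, so $h(x-y)$ and all of its derivatives are controlled by $\|h\|_{H_{q}^{\lambda,\rho}(G)}\,|\alpha|!/\rho^{|\alpha|}$.

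For statement (i) with $\Omega\neq\emptyset$, I would prove by induction on $m\in{\mathbb{N}}\setminus\{0\}$ that for $(h,\varphi)\in H_{q}^{\lambda,\rho}(G)\times C^{m}({\mathrm{cl}}\Omega)$ the restriction of ${\mathcal{P}}^{+}[h,\varphi]$ to ${\mathrm{cl}}{\mathbb{S}}[\Omega_2]^{-}$ lies in $C^{m}_{q}({\mathrm{cl}}{\mathbb{S}}[\Omega_2]^{-})$ and satisfies the same iterated derivative formula (\ref{qropo2}) as in Theorem \ref{qropo}. The base case $m=1$ is Lemma \ref{qpoder} (i); the inductive step differentiates the boundary integral on $\partial\Omega$ under the integral sign, which is legitimate because $h\in C^{\infty}$ on a neighborhood of $G\supseteq\{x-y:x\in{\mathrm{cl}}Q\setminus\Omega_{2},\,y\in\partial\Omega\}$. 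Extending to $\varphi\in C^{0}_{\omega,\rho}({\mathrm{cl}}\Omega)$, the $(k,l_k)$-th boundary term $I(k,l_k)$ in (\ref{qropo2}) is estimated exactly as in Theorem \ref{qropo}: the Roumieu bound on derivatives of $h$ in $G$, the Roumieu bound on derivatives of $\varphi$, and the combinatorial inequality $m_1! m_2!\le(m_1+m_2)!$ together yield
\[
\sup_{{\mathrm{cl}}Q\setminus\Omega_2}|I(k,l_k)|\leq m_{n-1}(\partial\Omega)\,\|h\|_{H_{q}^{\lambda,\rho}(G)}\|\varphi\|_{C^{0}_{\omega,\rho}({\mathrm{cl}}\Omega)}\frac{(|\beta|-1)!}{\rho^{|\beta|-1}}\,.
\]
The remaining volume term ${\mathcal{P}}^{+}[h,\partial^{\beta}\varphi]$ is bounded by Proposition \ref{qopoi} (iii); summing over the at most $n|\beta|$ boundary terms produces the desired Roumieu estimate of order $|\beta|!/\rho^{|\beta|}$, and bilinearity and continuity follow.

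For statement (ii), the argument is parallel but uses Lemma \ref{qpoder} (ii) (and Lemma \ref{qpoder} (iii) if $\Omega=\emptyset$, in which case $G=\emptyset$ and ${\mathcal{P}}^{-}[h,\varphi]={\mathcal{P}}^{-}[h,\varphi_{|Q}]$), together with Proposition \ref{qopoi} (vi) for the volume bound. The essential new ingredient is the remark following Lemma \ref{qpoder}: since $\varphi\in C^{0}_{q,\omega,\rho}({\mathrm{cl}}{\mathbb{S}}[\Omega]^{-})$ is $q$-periodic, identity (\ref{qpoderz}) makes every integral over $\partial Q$ that appears on the right-hand side of (\ref{qpoder2}) or (\ref{qpoder2a}) vanish identically. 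Consequently the iterated formula retains exactly the same form as (\ref{qropo2}), except that the sign in front of the sum is reversed and the volume integration runs over $Q\setminus{\mathrm{cl}}\Omega$. The estimation is then word-for-word the same.

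The main obstacle is bookkeeping rather than analysis: one has to verify carefully that each differentiation under the integral sign in the inductive step is justified, that the boundary integrals on $\partial Q$ truly cancel by $q$-periodicity of $\varphi$ (using (\ref{qpoderz}) from the remark), and that the combinatorial rearrangement of the iterated formula goes through as in \cite[p.~856]{La05}. Once the derivative formula is in hand, the Roumieu-class estimates are a routine adaptation of those in Theorem \ref{qropo}, so the proof of statement (ii) can largely be omitted in favor of a reference to the proof of statement (i).
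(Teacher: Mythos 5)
Your proposal follows the paper's strategy faithfully: prove the iterated derivative formula (\ref{qropo8}) (and its $\mathcal{P}^{+}$-analog, which is (\ref{qropo2}) with a sign change) by induction on $m$, estimate the boundary terms using the Roumieu bound on $h_{|{\mathrm{cl}}G}$, the Roumieu bound on $\varphi$, and the inequality $m_{1}!\,m_{2}!\leq(m_{1}+m_{2})!$, and control the volume term via Proposition \ref{qopoi} (iii) or (vi). You also correctly identify the crucial role of the remark (\ref{qpoderz}): periodicity of $\varphi$ kills the $\partial Q$ terms in (\ref{qpoder2}) and (\ref{qpoder2a}), so the iterated formula involves only $\partial\Omega$.

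One detail deserves a more careful treatment than you give it. You reduce at the outset to bounding $\partial^{\beta}\mathcal{P}^{\pm}[h,\varphi]$ on ${\mathrm{cl}}Q\setminus\Omega_{2}$, since that is the slice entering the $C^{0}_{q,\omega,\rho}$-norm. But $\partial Q$ consists of \emph{interior} points of ${\mathbb{S}}[\Omega_{2}]^{-}$, so membership in $C^{\infty}_{q}({\mathrm{cl}}{\mathbb{S}}[\Omega_{2}]^{-})$ (which must hold before the Roumieu norm makes sense) requires the potential to be smooth on a two-sided neighborhood of $\partial Q$, not merely $C^{m}$ on the one-sided closure ${\mathrm{cl}}Q\setminus\Omega_{2}$. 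The paper makes this precise by introducing a smooth bounded open set $V\supseteq{\mathrm{cl}}Q$ with ${\mathrm{cl}}V\cap(qz+{\mathrm{cl}}\Omega)=\emptyset$ for all $z\in{\mathbb{Z}}^{n}\setminus\{0\}$, shrinking $V$ so that $\{t-s:\,t\in{\mathrm{cl}}V\setminus\Omega_{2},\,s\in\partial\Omega\}\subseteq G$, and running the entire inductive argument on ${\mathrm{cl}}W$ with $W\equiv V\setminus{\mathrm{cl}}\Omega_{2}$; periodicity then yields $C^{m}_{q}({\mathrm{cl}}{\mathbb{S}}[\Omega_{2}]^{-})$. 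Your version can be repaired without much trouble (since $G$ is an open neighborhood of the compact difference set, the $\partial\Omega$ boundary integral is in fact $C^{\infty}$ on a full neighborhood of ${\mathrm{cl}}Q\setminus\Omega_{2}$), but as written the reduction skips the point; you should either introduce the analogue of $W$ or make the neighborhood argument explicit. Finally, the paper proves statement (ii) first and derives (i) from it by replacing $\mathcal{P}^{-}$ with $\mathcal{P}^{+}$ and flipping a sign; you do the reverse, which is merely a matter of exposition.
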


\medskip

\begin{proof} We first prove statement (ii). For the sake of simplicity, we write just $\varphi$ instead of $\varphi_{|Q\setminus
{\mathrm{cl}}\Omega
}$, and we prove that if $m\in {\mathbb{N}}\setminus\{0\}$ and if $(h,\varphi)\in
H^{\lambda,\rho}_{q}(G)\times C^{m}_{q} ({\mathrm{cl}}
{\mathbb{S}}[\Omega]^{-})$, then 
$
{\mathcal{P}}^{-}[h,\varphi] _{|  {\mathrm{cl}}
{\mathbb{S}}[\Omega_{2}]^{-}
 }
\in C^{m}_{q}
({\mathrm{cl}}
{\mathbb{S}}[\Omega_{2}]^{-})$ and
\begin{eqnarray}
	\label{qropo8}
	\lefteqn{
\partial^{\beta}
{\mathcal{P}}^{-}[h,\varphi](x)=
{\mathcal{P}}^{-}[h,\partial^{\beta}\varphi](x)
}
\\
&&
\nonumber\qquad\qquad\qquad\qquad
+\sum_{k=1}^{n}\sum_{l_{k}=0}^{\beta_{k}-1}
\partial^{\beta_{n}}_{x_{n}}{...}\partial^{\beta_{k+1}}_{x_{k+1}}\partial^{l_{k}}_{x_{k}}
\biggl\{\biggr.
\int_{\partial\Omega}h(x-y)
\\
&&
\nonumber\qquad\qquad\qquad\qquad\qquad
\times
\left(\partial^{\beta_{k}-1-l_{k}}_{y_{k}}
\partial^{\beta_{k-1}}_{y_{k-1}}{...}\partial^{\beta_{1}}_{y_{1}}\varphi(y)\right)
(\nu_\Omega)_{k}(y)\,d\sigma_{y}
\biggl.\biggr\}\,,
\end{eqnarray}
for all $x\in{\mathrm{cl}}{\mathbb{S}}[\Omega_{2}]^{-}$ and for all $\beta\in {\mathbb{N}}^{n}$ such that $|\beta|\leq m$,  where we understand that 
$\sum_{l_{k}=0}^{\beta_{k}-1}$ is omitted 
if $\beta_{k}=0$, and where the integral in the right hand side is omitted if $\Omega=\emptyset$. Now let $V$ be a bounded open connected subset of ${\mathbb{R}}^{n}$ of class $C^{\infty}$ such that
\[
{\mathrm{cl}} Q\subseteq V\,,
\qquad
{\mathrm{cl}} V\cap (qz+{\mathrm{cl}}  \Omega)=\emptyset\,,
\qquad\forall z\in {\mathbb{Z}}^{n}\setminus\{0\}\,.
\]
 Possibly shrinking $V$, we can assume that
\[
 \{
 t-s:\, t\in {\mathrm{cl}} V\setminus\Omega_{2}, s\in\partial\Omega
 \}\subseteq G\,,
\]
where we understand that the set in the left hand side is empty if $\partial\Omega = \emptyset$. 
Let $W\equiv V\setminus{\mathrm{cl}}\Omega_{2}$. Since $h$ is $q$-periodic, it suffices to show that ${\mathcal{P}}^{-}[h,\varphi]_{| {\mathrm{cl}} W}\in C^{m}({\mathrm{cl}} W)$ and that (\ref{qropo8}) holds for all 
$x\in {\mathrm{cl}} W$.  If $m=1$, then the statement follows by Lemma \ref{qpoder} (ii), (iii), and (\ref{qpoderz}). 

Next we  assume that the statement holds for $m$  and we prove it for $m+1$. We will confine ourself to the case where $\Omega\neq\emptyset$. The case where $\Omega=\emptyset$ can be treated by a simplification of the argument used for $\Omega\neq\emptyset$. 

Let $(h,\varphi)\in
H^{\lambda,\rho}_{q}(G)\times C^{m+1}_{q} ({\mathrm{cl}}
{\mathbb{S}}[\Omega]^{-})$. By the inductive assumption, 
${\mathcal{P}}^{-}[h,\frac{\partial \varphi }{\partial x_{j}}]_{|{\mathrm{cl}}
{\mathbb{S}}[\Omega_{2}]^{-}}
\in  C^{m }_{q} ({\mathrm{cl}}
{\mathbb{S}}[\Omega_{2}]^{-})$ for all $j\in\{1,{...},n\}$.
Since $h\in C^{m+1}({\mathrm{cl}}G)$ and $\varphi\, , (\nu_\Omega)_{j}\in C^{0}(\partial\Omega)$, the classical differentiability theorem  for integrals depending on a parameter implies that the second term in the right hand side of formula (\ref{qpoder2}) defines a function of class $C^{m}(
 {\mathrm{cl}}W)$  (see also (\ref{qpoder7}) and following comment).  Then formula (\ref{qpoder2}) implies that $\frac{\partial}{\partial x_{j}}{\mathcal{P}}^{-}[h,\varphi]$ belongs to $C^{m}({\mathrm{cl}}W)$.
Hence, ${\mathcal{P}}^{-}[h,\varphi]_{|
{\mathrm{cl}}W
}\in  C^{m+1}(
 {\mathrm{cl}}W)$. 

Next one proves the formula for the derivatives  by following the lines  of the corresponding argument  of \cite[p.~856]{La05}: first one proves the formula for $\partial^{\beta}=\partial^{\beta_{j}}_{x_{j}}$ by finite induction on $\beta_{j}$, then one proves the formula for 
 $\partial^{\beta}=\partial^{\beta_{1}}_{x_{1}}{...}
 \partial^{\beta_{j}}_{x_{j}}$ by finite induction on $j\in\{1,{...},n\}$, and  finally one deduces that the formula holds for $|\beta|\leq m+1$.

If $(h,\varphi)\in
H^{\lambda,\rho}_{q}(G)\times C^{\infty} ({\mathrm{cl}}Q\setminus\Omega)$, then by applying the above  statement for all $m\in {\mathbb{N}}\setminus\{0\}$ we deduce that
$
{\mathcal{P}}^{-}[h,\varphi]_{|  {\mathrm{cl}}W }
$ belongs to $ C^{\infty} 
({\mathrm{cl}}W)$ and that formula (\ref{qropo8}) holds for all order derivatives.

We now assume that $(h,\varphi)\in
H^{\lambda,\rho}_{q}(G)\times C^{0}_{\omega,\rho}({\mathrm{cl}}{\mathbb{S}}[\Omega]^{-})$ and we turn to estimate the sup-norm in ${\mathrm{cl}}W$ of the sum in the right hand side of (\ref{qropo8}), which we denote by  $I$. 

We abbreviate by $I(k,l_{k})$ the $(k,l_{k})$-th term in the sum $I$, and we now estimate the supremum of $I(k,l_{k})$ in ${\mathrm{cl}}W$. We can clearly assume that $\beta_{k}>0$. Then we obtain the inequality in (\ref{qropo3}) with $W$ in place of $\Omega_{1}$. Then we can argue precisely as in the proof of Theorem \ref{qropo} by replacing $\Omega_{1}$ by $W$ and $\Omega$ by $Q\setminus{\mathrm{cl}}\Omega$ and obtain
\begin{equation}
\label{qropo12}
\sup_{ \mathrm{cl}W }|I|
\leq n \rho
m_{n-1}(\partial\Omega)
\|h\|_{  H^{\lambda,\rho}_{q}(G)  }
\|\varphi\|_{ C^{0}_{\omega,\rho}({\mathrm{cl}}Q\setminus\Omega)}
\frac{|\beta|!}{
\rho^{|\beta|}
 }\,.
\end{equation}
On the other hand, Proposition \ref{qopoi} (vi) implies that
\begin{equation}
\label{qropo13}
\|{\mathcal{P}}^{-}[h,\partial^{\beta}\varphi]\|_{L^\infty(
{\mathbb{S}}[\Omega]^{-})}
\leq 2^{n} \int_{\tilde{Q} }|y|^{-\lambda}\,dy
\|h\|_{A^{0}_{q,\lambda} } 
\| \partial^{\beta} \varphi\|_{L^{\infty}({\mathrm{cl}}Q\setminus\Omega)}\,.
\end{equation}
Then formula (\ref{qropo8}) and inequalities (\ref{qropo12}) and (\ref{qropo13}) imply that there exists $C>0$ such that
\[
\|\partial^{\beta}{\mathcal{P}}^{-}[h,\varphi]_{|W}\|_{L^\infty(W)}
\leq C \|h\|_{  H^{\lambda,\rho}_{q}(G)  }
\|   \varphi\|_{ C^{0}_{\omega,\rho}({\mathrm{cl}}W)}
\frac{|\beta|!}{\rho^{|\beta| }}\qquad\forall
\beta\in {\mathbb{N}}^{n}\,.
\]
Hence, statement (ii) holds true. 

The proof of statement (i)  follows the lines of the proof of statement (ii). We only point out that formula (\ref{qropo8}) holds if we replace
${\mathcal{P}}^{-}$ by ${\mathcal{P}}^{+}$, and   the plus sign in the right hand side by a minus sign. \end{proof}

\section{A remark on the class $H_q^{\lambda,\rho}(G)$ of weakly singular $q$-periodic kernels}\label{Sq}

In the present section,  we show that  if $q$ is a real $n\times n$ positive definite diagonal matrix (such as (\ref{diag})), then there exists  a class
of $q$-periodic kernels which is actually contained in
the class $H_{q}^{\lambda,\rho}(G)$ as in the assumptions of Theorem
 \ref{qropo} or of Theorem \ref{qropoo}
  and which is  relevant in the analysis of $q$-periodic non-homogeneous
boundary value problems for elliptic equations.   Let
$a_\alpha\in\mathbb{C}$ for all $\alpha\in\mathbb{N}^n$ with
$|\alpha|\le 2$ and let $P(x)$ be the polynomial
\[
P(x)\equiv \sum_{|\alpha|\leq 2}a_{\alpha}x^{\alpha}\qquad\forall x\in
{\mathbb{R}}^{n}\,.
\]
Then $P(D)$ denotes the partial differential operator
\[
P(D)\equiv \sum_{|\alpha|\leq 2}a_{\alpha}D^{\alpha}\,.
\]
We also assume that $P(D)$ is of  second order  and strongly elliptic, namely that
\[
\mathrm{Re}\Bigl(\sum_{|\alpha|=2}a_{\alpha}\xi^{\alpha}\Bigr)\neq 0\qquad \forall \xi\in
{\mathbb{R}}^{n}\setminus\{0\}\,.
\]
Then, a $q$-periodic  distribution $F_q$ is a $q$-periodic
fundamental solution of  $P(D)$ if
\begin{equation}
\label{introd4}
P(D) F_q=\sum_{z\in{\mathbb{Z}}^{n}}\delta_{qz}\,,
\end{equation}
where $\delta_{qz}$ denotes the Dirac measure with mass at $qz$, for
all $z\in {\mathbb{Z}}^{n}$.
Unfortunately however, not all operators $P(D)$ admit
$q$-periodic   fundamental solutions: not even well known operators,
such as the Laplace operator $\Delta$, do have one.

Instead, if we denote by $E_{2\pi i q^{-1} z}$, the function defined by
\[
E_{2\pi i q^{-1} z}(x)\equiv e^{2\pi i (q^{-1} z)\cdot x}
\qquad  \forall x\in{\mathbb{R}}^{n}\,,
\]
for all $z\in {\mathbb{Z}}^{n}$, then one can show that the set
\[
{\mathbb{Z}}(P)\equiv
\{
z\in{\mathbb{Z}}^{n}:\,P(2\pi iq^{-1}z)=0
\}
\]
is finite and that the $q$-periodic distribution
\begin{equation}
\label{introd6}
S_q\equiv \sum_{z\in
{\mathbb{Z}}^{n}\setminus {\mathbb{Z}}(P) }
\frac{1}{m_{n}(Q)}\frac{1}{P(2\pi iq^{-1}z)}E_{2\pi i q^{-1}z}
\end{equation}
satisfies the equality
\begin{equation}
\label{introd7}
P(D)S_q
=\sum_{ z\in {\mathbb{Z}}^{n} }\delta_{qz}-
\sum_{z\in {\mathbb{Z}}(P) }\frac{1}{m_{n}(Q)}E_{2\pi i q^{-1}z}\,
\end{equation}
(cf.~\textit{e.g.}, Ammari and Kang~\cite[p.~53]{AmKa07},
\cite[\S3]{LaMu11}).  Equality (\ref{introd7}) can be considered as an
effective substitute of equality (\ref{introd4}), and the distribution
 $S_q$ can be exploited  to introduce either layer or volume
potentials, which can be employed to analyze boundary value problems
on $q$-periodic domains. We note that Lin and Wang \cite{LiWa10}, Mityushev and Adler
\cite{MiAd02}, and Mamode \cite{Ma14}  have proved the validity of a
constructive formula for the $q$-periodic analog of the fundamental
solution for the Laplace equation in case $n=2$  via elliptic
functions.

As it is well known (see, {\it e.g.}, \cite[Thm.~3.5]{LaMu11}), if
$S_{q} $ is the distribution in \eqref{introd6} and if $S$ is a
classical (non-periodic) fundamental solution of the same operator $P$,
then $S_{q}-S$ is a real analytic function in $(\mathbb{R}^n \setminus
q\mathbb{Z}^n)\cup \{0\}$. Accordingly, by John \cite{Jo55}, one
deduces that
\[
\sup_{
x\in  \tilde{Q}\setminus\{0\}
}
|S_{q}(x)|\,|x|^{n-2}<+\infty\,,
\qquad
\sup_{
x\in \tilde{Q}\setminus\{0\}
}
\left|\frac{\partial S_{q}}{\partial x_{j} }(x)\right|\,|x|^{n-1}<+\infty
 \,,
\]
for all  $j\in\{1,{...},n\}$,
if $n>2$, and that
\[
\sup_{
x\in  \tilde{Q}\setminus\{0\}
}
|S_{q}(x)|\,|x|^{1/2}<+\infty\,,
\qquad
\sup_{
x\in  \tilde{Q}\setminus\{0\}
}
\left|\frac{\partial S_{q}}{\partial x_{j} }(x)\right|\,|x|^{3/2}<+\infty
\,,
\]
for all  $j\in\{1,{...},n\}$,
if $n=2$. Moreover, $S_{q}$ is analytic in ${\mathbb{R}}^{n}\setminus
q{\mathbb{Z}}^{n}$, and the classical Cauchy inequalities for the
derivatives of $S_{q}$ on  a compact set imply that
$
S_{q}\in C^{0}_{\omega,\rho}(
{\mathrm{cl}} G
)$
for all  open bounded subsets $G$ of ${\mathbb{R}}^{n}$ such that $({\mathrm{cl}}G)\cap q{\mathbb{Z}}^{n}=\emptyset$ and for  $\rho\in]0,+\infty[$ sufficiently small (cf.~\textit{e.g.},
John~\cite[p.~65]{Jo82}).   Hence,
\[
S_{q}\in H^{\max\{n-2,\frac{1}{2}\},\rho}_{q}(G)\,,
\]
and thus our class $H^{\max\{n-2,\frac{1}{2}\},\rho}_{q}(G)$ contains
the  $q$-periodic analogs of the fundamental solutions of
second order elliptic operators.

Also if we have a one-parameter analytic family of  $q$-periodic analogs of the
fundamental solution  in the space
$H^{\max\{n-2,\frac{1}{2}\},\rho}_{q}(G)$, we can apply   Theorems
\ref{qropo} and \ref{qropoo} and deduce results of analytic dependence
for the corresponding volume potentials upon the parameter and
densities (or moments) of the volume potentials  (see, {\it e.g.}, \cite{DaLaMu15}, where the authors have obtained similar results for the non-periodic case).

  \section*{Acknowledgment}
   The authors thank G.~Mishuris for fruitful discussions on subjects related to the paper.
The authors acknowledge the support of `Progetto di Ateneo: Singular perturbation problems for differential operators -- CPDA120171/12' - University of Padova and of `INdAM GNAMPA Project 2015 - Un approccio funzionale analitico per problemi di perturbazione singolare e di omogeneizzazione'. M. Dalla Riva also acknowledges the support of HORIZON 2020 MSC EF project FAANon (grant agreement MSCA-IF-2014-EF - 654795) at the University of Aberystwyth, UK. M.~Lanza de Cristoforis acknowledges the support of  the grant EP/M013545/1: ``Mathematical Analysis of Boundary-Domain Integral Equations for Nonlinear PDEs'' from the EPSRC, UK.  P.~Musolino  acknowledges the support of an `assegno di ricerca INdAM'. P.~Musolino is a S\^er CYMRU II COFUND fellow, also supported by the `S\^er Cymru National Research Network for Low Carbon, Energy and Environment'.


\begin{thebibliography}{11}


 \bibitem{Al02}
G.~Allaire, {\it Shape optimization by the homogenization method}, Springer-Verlag, New York, 2002.


\bibitem{AmKa07}
H.~Ammari and H.~Kang,  {\em Polarization and moment tensors},   Applied
  Mathematical Sciences {\bf  162},  Springer, New York, 2007.
  
\bibitem{AmKaLe09}
H.~Ammari, H.~Kang, and H.~Lee, {\em Layer potential techniques in spectral analysis}, Mathematical Surveys and Monographs  {\bf 153}, American Mathematical Society, Providence, RI, 2009.


\bibitem{BaPa89}
N.~Bakhvalov and G.~Panasenko, {\it Homogenisation: Averaging Processes in Periodic Media}, Kluwer Academic Publishers Group, Dordrecht, 1989.

\bibitem{BeLiPa78}
A.~Bensoussan, J.L.~Lions, and G.~Papanicolaou, {\it Asymptotic analysis for periodic structures}, North-Holland Publishing Co., Amsterdam-New York, 1978.

%\bibitem{PoBoMcMo99} C.G.~Poulton, L.C.~Botten, R.C.~McPhedran, and A.B.~Movchan, {\em Source-neutral Green's functions for periodic problems in electrostatics, and their equivalents in electromagnetism.} R. Soc. Lond. Proc. Ser. A Math. Phys. Eng. Sci. {\bf 455}  (1999), 1107--1123. 


\bibitem{Ch95} 
A.~Charalambopoulos, {\em On the Fr\'echet differentiability of boundary integral operators in the inverse elastic scattering problem}, Inverse Problems {\bf 11} (1995),   1137--1161.

% \bibitem{CiFe89} B.~Cichocki and  B.U.~Felderhof, {\em Periodic fundamental solution of the linear Navier-Stokes equations.} Phys. A {\bf159}  (1989), 19--27.  

\bibitem{CoLe12a} 
M.~Costabel and F.~Le Lou\"er, {\em Shape derivatives of boundary integral operators in electromagnetic scattering. Part I: Shape differentiability of pseudo-homogeneous boundary integral operators}, Integral Equations Oper.~Theory {\bf 72} (2012),  509--535.

 \bibitem{CoLe12b} 
 M.~Costabel and F.~Le Lou\"er, {\em Shape derivatives of boundary integral operators in electromagnetic scattering. Part II: Application to scattering by a homogeneous dielectric obstacle}, Integral Equations 
 Oper.~Theory {\bf 73} (2012),  17--48.

 


 \bibitem{DaLa10}
 M.~Dalla Riva and M.~Lanza de Cristoforis, {\em A perturbation result for the layer potentials of general second order differential operators with constant coefficients}, J. Appl. Funct. Anal. {\bf 5} (2010),  10--30.
 
\bibitem{DaLaMu15} M.~Dalla Riva, M.~Lanza de Cristoforis, and P.~Musolino, {\em Analytic dependence of volume potentials corresponding to parametric families of fundamental solutions}, Integral Equations Operator Theory, {\bf 82} (2015), 371--393.


 
  \bibitem{DaMu13} M.~Dalla Riva and P.~Musolino, {\em A singularly perturbed non-ideal transmission problem and application to the effective conductivity of a periodic composite}, SIAM J. Appl. Math. {\bf 73} (2013), 24--46.
 
 
 \bibitem{De85}
K.~Deimling, {\em Nonlinear functional analysis},
  Springer-Verlag, Berlin, 1985.


% \bibitem{Ew21} P.P.~Ewald, {\em Die Berechnung optischer und elektrostatischer Gitterpotentiale.} Ann. Phys.  {\bf 369}  (1921), 253--287.  


 
 \bibitem{GiTr83}
D.~Gilbarg and N.S.~Trudinger, {\em Elliptic partial 
differential equations of second order},   Springer Verlag, 1983.

\bibitem{HaKr04}
H.~Haddar and R.~Kress, {\em On the Fr\'echet derivative for obstacle scattering with an impedance boundary condition}, SIAM J. Appl. Math. {\bf 65} (2004),  194--208. 

%\bibitem{Ha59} H.~Hasimoto, {\em On the periodic fundamental solutions of the Stokes' equations and their application to viscous flow past a cubic array of spheres.} J. Fluid Mech.  {\bf 5}  (1959), 317--328. 


\bibitem{He95} 
F.~Hettlich, {\em Fr\'echet derivatives in inverse obstacle scattering}, Inverse Problems {\bf 11} (1995),   371--382. 

\bibitem{Jo55}
F.~John, {\em Plane waves and spherical means applied to partial differential
equations}, Interscience Publishers, New York-London, 1955.


\bibitem{Jo82}
F.~John, {\em Partial Differential Equations}, Springer, New York, {\it etc.}, 1982.


\bibitem{KaMiPe15}
D.~Kapanadze, G.~Mishuris, and E.~Pesetskaya, {\em Exact solution of a nonlinear heat conduction problem in a doubly periodic 2D composite material}, Arch. Mech. (Arch. Mech. Stos.) {\bf 67} (2015), 157--178. 

\bibitem{KaMiPe15b}
D.~Kapanadze, G.~Mishuris, and E.~Pesetskaya, {\em Improved algorithm for analytical solution of the heat conduction problem in doubly periodic 2D composite materials}, Complex Var. Elliptic Equ. {\bf 60} (2015), 1--23. 

\bibitem{Ki93} 
A.~Kirsch, {\em The domain derivative and two applications in inverse scattering theory}, Inverse Problems {\bf 9} (1993),  81--96.

\bibitem{KrPa99}
R.~Kress and L.~P\"aiv\"arinta, {\em On the far field in obstacle scattering},
SIAM J. Appl. Math. {\bf 59}  (1999),  1413--1426. 

\bibitem{La05}
M.~Lanza de Cristoforis, {\em  A domain perturbation problem for the Poisson equation}, Complex Var. Theory Appl. {\bf  50} (2005),   851--867.

\bibitem{LaMu11} 
M.~Lanza de Cristoforis and P.~Musolino, {\em A perturbation result for periodic layer potentials of general second order differential operators with constant coefficients}, Far East J. Math. Sci. (FJMS) {\bf 52} (2011), 
  75--120.
  
 \bibitem{LaMu14} M.~Lanza de Cristoforis and P.~Musolino, {\em A quasi-linear heat transmission problem in a periodic two-phase dilute composite. A functional analytic approach}, Comm. Pure Appl. Anal.  {\bf13} (2014), 2509--2542.

\bibitem{LaRo04}
M.~Lanza~de~Cristoforis and L.~Rossi, {\em 
Real analytic dependence of simple and double 
layer potentials upon perturbation 
of the support and of the density}, J. Integral Equations 
Appl.  {\bf 16} (2004),  137--174.

\bibitem{LaRo08}
M.~Lanza de Cristoforis and L.~Rossi, {\em Real analytic dependence of simple and double layer potentials for the Helmholtz equation upon perturbation of the support and of the density}, Analytic methods of analysis and differential equations: AMADE 2006, 193--220, Camb. Sci. Publ., Cambridge, 2008.



\bibitem{Le12}
F.~Le Lou\"er, {\em On the Fr\'echet derivative in elastic obstacle scattering}, SIAM J. Appl. Math. {\bf 72} (2012),  1493--1507.

\bibitem{LiWa10}
{C.S.~Lin and C.L.~Wang},  {\em Elliptic functions, Green functions and the mean field equations on tori}, {Ann. of Math. } {\bf 172} (2010),  911--954.

\bibitem{Ma14} M.~Mamode,  {\em Fundamental solution of the Laplacian on flat tori and boundary value problems for the planar Poisson equation in rectangles}, Bound.~Value Probl. {\bf 2014} (2014), 221, 9 pp.


  \bibitem{Mi02} 
G.W.~Milton, {\it The Theory of Composites,} Cambridge Monographs on Applied and Computational Mathematics, Cambridge University Press, Cambridge, 2002.

\bibitem{MiAd02}  V.~Mityushev and P.M.~Adler.  {\em Longitudinal permeability of spatially periodic rectangular arrays of circular cylinders. I. A single cylinder in the unit cell}, ZAMM Z. Angew. Math. Mech., {\bf 82}, (2002), pp.~335--345.



\bibitem{MiSl14}
G.S.~Mishuris and L.I.~Slepyan, {\em Brittle fracture in a periodic structure with internal potential energy}, Proc.~R.~Soc.~Lond.~Ser.~A Math.~Phys.~Eng.~Sci. {\bf 470} (2014), 20130821, 25 pp.

\bibitem{MiPeRo08}
V.V.~Mityushev, E.~Pesetskaya, and S.V.~Rogosin, Analytical Methods for Heat Conduction in Composites and Porous Media, in {\it Cellular and Porous Materials: Thermal Properties Simulation and Prediction} (eds A. \"Ochsner, G.E. Murch and M.J.S. de Lemos), Wiley-VCH Verlag GmbH \& Co. KGaA, Weinheim, Germany, 2008.



\bibitem{MoMoPo02}
A.B.~Movchan, N.V.~Movchan,  and C.G.~Poulton, {\em Asymptotic models of fields in dilute and densely packed composites}, Imperial College Press, 2002.


\bibitem{Ne67}
J.~Ne\v cas, {\em Les m\'{e}thodes directes en th\'{e}orie des \'{e}quations elliptiques}, Masson et Cie,  Paris 1967.

\bibitem{Po94}
R.~Potthast, {\em Fr\'echet differentiability of boundary integral operators in inverse acoustic scattering}. Inverse Problems {\bf 10} (1994),  431--447. 


\bibitem{Po96a} 
R.~Potthast, {\em Fr\'echet differentiability of the solution to the acoustic Neumann scattering problem with respect to the domain}, J. Inverse Ill-Posed Probl. {\bf 4} (1996),  67--84.
 
\bibitem{Po96b}  
R.~Potthast, {\em Domain derivatives in electromagnetic scattering}, Math. Methods Appl. Sci. {\bf 19} (1996),  1157--1175. 

\bibitem{Pr98}
L.~Preciso, {\em Perturbation analysis of the conformal sewing problem and
  related problems}, PhD Dissertation, University of Padova, 1998.
  
  \bibitem{Pr00} 
L.~Preciso, {\em Regularity of the composition and of the inversion 
operator and perturbation analysis of the conformal sewing problem in 
Roumieu type spaces},  Nat.\ Acad.\ Sci.\ Belarus, Proc.\ Inst.\ Math.  
{\bf 5} (2000),  99--104.

%\bibitem{SaZhPr91} A.S.~Sangani, D.Z.~Zhang, and A.~Prosperetti, {\em The added mass, Basset, and viscous drag coefficients in nondilute bubbly liquids undergoing small-amplitude oscillatory motion.}  Phys. Fluids A. {\bf 3} (1991), 2955--2970.


\bibitem{Tr87}
G.M.~Troianiello, {\em Elliptic differential equations 
and obstacle problems}, Plenum Press, New York and London, 1987.



\end{thebibliography}
\end{document}